\theoremstyle{plain}
\newtheorem{theorem}{Theorem}
\newtheorem{proposition}{Proposition}
\theoremstyle{definition}
\newtheorem{definition}{Definition}
\theoremstyle{remark}
\newtheorem{remark}{Remark}
\newtheorem*{rmk}{Remark on non-dimensionalization}
\begin{document}

\title{Passivity-based PI control \\ of first-order systems \\ with I/O communication delays: \\ A complete $\sigma$-stability analysis}

\author{Fernando Casta\~nos, Edgar Estrada, Sabine Mondi\'e and Adri\'an Ram\'irez}
\affil{Department of Automatic Control, Cinvestav-IPN, \\Av. IPN No. 2508, Col. Zacatenco C.P. 07360, M\'{e}xico, D.F.}

\maketitle

\begin{abstract}
The PI control of first-order linear passive
systems through a delayed communication channel is revisited in light of the relative stability concept called
$\sigma$-stability. Treating the delayed communication channel as a transport PDE, the passivity of the overall control-loop is guaranteed, resulting in a closed-loop system of neutral nature. Spectral methods are then applied to the system to obtain a complete stability map. In particular, we perform the $\mathcal{D}$-subdivision method to declare the exact $\sigma$-stability regions in the space of PI parameters. This framework is then utilized to analytically determine the maximum achievable exponential decay rate $\sigma^{*}_{d}$ of the system while achieving the PI tuning as explicit function of $\sigma^{*}_{d}$ and system parameters.
\end{abstract}

\section{Introduction}

Passivity-based control relies on the fact that the power-preserving
interconnection of two passive subsystems yields again a passive system \citep{ortegaR}.
Lyapunov stability of the interconnected system follows from passivity while
asymptotic stability is usually achieved by adding appropriate damping. Due
to its simplicity and robustness, passivity-based control has attracted
researchers and practitioners in the control community for several
decades, e.g. \citep{youla1959,willems1972a,willems1972b,hill1976,byrnes1991b,schaft}.

However, if a delayed communication channel stands between the plant
and the controller, as in a typical control scenario, passivity arguments fail
due to non-passive properties of the channel. Here, the loss of passivity follows from the fact that the Nyquist plot of a pure delay does not lie in the right-hand side of the complex plane. In their seminal work, inspired from the study of transmission lines~\citet{anderson1989} proposed a useful modification of the communication channel to remedy the aforementioned design problem. More precisely, the communication channel is transformed into a passive system, thus recovering the simplicity and effectiveness of the passivity-based design.
This idea has been discussed in many contributions and has given rise to an
outstanding number of proposals addressing optimality issues, applications in the field of robotics, motor
control, among other studies. The reader is referred to~\citet{nuno2011} for a recent
tutorial.

This paper revisits the modified communication channel from the perspective
of time-delay systems theory using spectral methods considering first-order linear passive systems. The problem is motivated by the wide variety of industrial processes described by first-order plants with time-delay and commonly regulated by PI controllers \citep{silva2001}, such as DC servomotors extensively used in industry. In this paper, the emphasis is put on the performance of the closed-loop system when a communication channel stands between the plant and the controller, which is quantified by its $\sigma$-stability degree. Here, $\sigma$ approximates the exponential decay rate of the system response.

Our analysis is based on classical results of time-delay systems
of retarded and neutral nature~\citep{bellman,haleVerduyn}. The problem under consideration,
though infinite dimensional, involves a reduced number of parameters. Hence, a
comprehensive frequency domain analysis of the closed-loop characteristic
quasipolynomial can be performed. Particularly, we deploy a critical extension of
the $\mathcal{D}$-subdivision method of~\citet{neimark1949},
see also~\citet{sipahi2011} for advanced methods, which consists on (i)
the determination of the stability boundaries corresponding to roots at $s=-\sigma $
and $s=-\sigma +j\omega $, which provides a partition of the space of
PI parameters and (ii) the verification of the relative stability degree $\sigma$ of each region in the partition. Having generated the complete set of $\sigma$-stability boundaries and determined the $\sigma$-stability regions, the exact $\sigma$-stability maps follow. This framework then results in a fully analytic characterization of the maximum achievable exponential decay rate for which simple tuning formulae for practitioners are finally declared.

The contribution is organized as follows: In Section~\ref{sec:delay}, the
delay-free and the fixed, non-zero delay cases are analyzed, illustrating
the failure and loss of performance in the passivity-based design strategy.
In Section~\ref{sec:scattering}, the scattering transformation is introduced
and the characteristic function is obtained, the $\sigma$-stability maps are
sketched and tuning rules for points of interest are derived. A theoretical limit
on the $\sigma$-stability is found and a tuning rule for the scattering
transformation is proposed. It is shown that, when the rule is followed,
the theoretical limit can be approached arbitrarily close. Concluding remarks
are given in Section~\ref{sec:conc}.

\section{Problem statement} \label{sec:delay}

Consider a first-order linear system of the form
\begin{subequations} \label{eq:plant}
 \begin{align}
 \dot{x} &= -ax+bu_{1} \;, \\
   y_{1} &= x \;,
\end{align}
\end{subequations}
where $u_{1}$, $y_{1}$ and $x$ $\in \mathbb{R}$ are the input, output and state,
respectively. The parameters $a$ and $b$ are assumed to be non negative, which ensures
passivity with storage function $V_1(x) = x^2/(2b)$.

Consider the PI controller
\begin{subequations} \label{eq:PI}
\begin{align}
 \dot{\xi} &= u_{0} \;, \\
     y_{0} &= k_{p}u_{0}+k_{i}\xi \;,
\end{align}
\end{subequations}
where $u_{0}$, $y_{0}$, $\xi $ $\in \mathbb{R}$ are the controller input, output and
state, respectively. The proportional and integral gains $k_{p}$ and $k_{i}$
are assumed to be positive, so the controller is also passive. The system and the controller are interconnected as per the following pattern
\begin{equation*}
u_{1}=-y_{0}\quad\textnormal{and}\quad u_{0}=y_{1}-y^{\star}_{1}.
\end{equation*}

Since the interconnection of two passive systems is again passive, the closed-loop
characteristic polynomial is stable for all gains.

\begin{figure}
\begin{center}
 \includegraphics[width=0.80\linewidth]{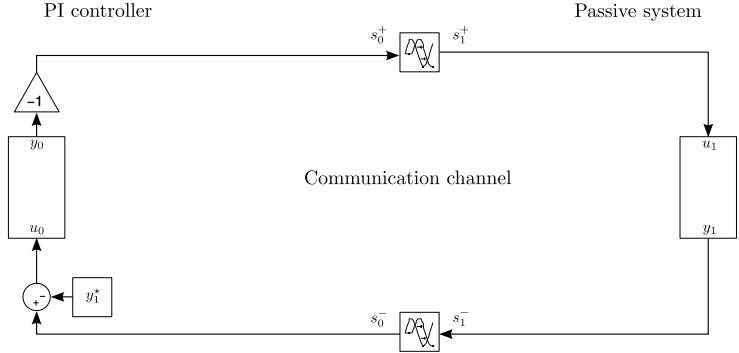}
\end{center}
\caption{A control-loop including a delayed communication channel.}
\label{fig:NoTrans}
\end{figure}

However, when a communication channel with delays is introduced in the loop, as
shown in Fig.~\ref{fig:NoTrans}, the closed-loop transfer function takes the form
\begin{equation} \label{eq:TF}
 \frac{y_{1}(s)}{y_{1}^{\ast }(s)} =
  \frac{(k_{p}s+k_{i})be^{-h_{1}s}}{s^{2}+as+(k_{p}s+k_{i})be^{-hs}}
\end{equation}
with $h_1$ the forward delay, $h_2$ the return delay and $h=h_{1}+h_{2}$ the round-trip
delay. The stability properties of the closed-loop are then defined by the location of the roots of the characteristic equation
\begin{equation} \label{eq:den_hp}
 p(s)=s^{2}+as+(k_{p}s+k_{i})be^{-hs}\;,
\end{equation}
also known as the characteristic quasipolynomial. Notice that the presence of the delay $h>0$ in the communication channel induces infinite-dimensionality to the system due to the exponential term and therefore the quasipolynomial in (\ref{eq:den_hp}) bears an infinite number of roots. Since it is impossible to compute all these roots, the stabilization of the zero-solution is not trivial. Moreover, since the delay channel is not passive, the passivity argument fails and stability can no longer be ensured for every combination of positive parameters.

In the following we consider the problem of finding the setting on the parameters $(k_{p},k_{i})$ that create the maximum decay rate for the system (\ref{eq:plant})-(\ref{eq:PI}) in the presence/absence of time-delays.

\subsection{Performance degradation as a result of delays} \label{sec:delayFree}

As a preliminary step for the characterization of the maximum decay rate, we begin with the decomposition of the $(k_{p},k_{i})$-plane. Besides pure stability, we will be concerned with the exponential decay of system solutions
with a given degree $\sigma$, that is, with the $\sigma$-stability of the system. This
happens only if all the roots of the characteristic quasipolynomial have real parts
less than $-\sigma$~\citep{gu}. Then, we will determine the set of all $(k_{p},k_{i})$ points for which the closed-loop system
is $\sigma$-stable.

Following the $\mathcal{D}$-subdivision method, we equate~\eqref{eq:den_hp} to zero
and set $s=-\sigma$, which gives the boundary
\begin{equation} \label{eq:kikp_hpe}
 k_{i}=\sigma k_{p}+\frac{\sigma a_\sigma}{be^{h\sigma}} \;.
\end{equation}
Now, setting $s=-\sigma +j\omega$ and solving for $k_{p}$ and $k_{i}$ gives the parametric
equations
\begin{subequations} \label{eq:kikp_hpp}
\begin{align}
 k_{p}(\omega ) &= \frac{-a_{2\sigma}\omega\cos(h\omega)+\left(\sigma a_\sigma+\omega^{2}\right)\sin(h\omega)}{b\omega e^{h\sigma}} \;, \label{eq:kp_hpp} \\
 k_{i}(\omega ) &= \frac{(\sigma^2+\omega^{2})\left(\omega\cos(h\omega)+a_\sigma\sin(h\omega)\right)}{b\omega e^{h\sigma}} \;. \label{eq:ki_hpp}
\end{align}
\end{subequations}
Equipped with~\eqref{eq:kikp_hpe},~\eqref{eq:kikp_hpp} and under continuity arguments, we can now declare the exact regions in $(k_{p},k_{i})$-plane for which the quasipolynomial~\eqref{eq:den_hp} is $\sigma$-stable.

\begin{rmk}
 In investigating the spectral properties of the considered control-loop it is not necessary to distinguish between every possible combination of $(a,b)$ parameters. In fact, introducing the following scaling variables $\tau = a t$ and $u_1' = (b/a) u_1$, the non-dimensional form of the general
 plant~\eqref{eq:plant} is obtained as
 \begin{align*}
  \frac{\mathrm{d}}{\mathrm{d} \tau}x(\tau) &= -x(\tau) + u_1'(\tau) \;, \\
                                  y_1(\tau) &= x(\tau) \;,
 \end{align*}
 where the dependence on $(a,b)$ is obviated. In other words, without loss of generality we can set $a=b=1$ in the figures that follow
 for demonstration of the control-loop properties. In consequence, and consistent with the above non-dimensionalization, the figures for all possible $(a,b)$ combinations might change quantitatively, but the qualitative features remain unchanged.
\end{rmk}

Considering that $\sigma$ is given, equations~\eqref{eq:kikp_hpe},~\eqref{eq:kikp_hpp} decompose the $(k_{p},k_{i})$-plane into a finite number of disjoint regions. Due to continuity arguments, each of these regions is then characterized by the same number of strictly $\sigma$-unstable characteristic roots. We will refer to the collection of all pairs $(k_{p},k_{i})$ for which the number of $\sigma$-unstable roots of $p(s)$ is zero as the $\sigma$-stability domain $\mathcal{D}_{\sigma}$.

Using ${a=b=1}$ in ~\eqref{eq:kikp_hpe},~\eqref{eq:kikp_hpp}, we obtain Fig.~\ref{fig:noScat} for $h\geq0$. In order to assist the reader, for a given $\sigma$ the corresponding $\mathcal{D}_{\sigma}$ is filled in color and the stability crossing boundaries are trimmed to fit the boundary of $\mathcal{D}_{\sigma}$. Two interesting observations are in order. Firstly, in Fig.~\ref{fig:regions_hp} one can observe that searching for a faster response will eventually result in the collapse of $\mathcal{D}_{\sigma}$. We have that, at the critical value $\sigma^{*}$ where the boundaries obtained from ~\eqref{eq:kikp_hpe},~\eqref{eq:kikp_hpp} meet each other at $\mathcal{D}_{\sigma^{*}}$, a triple real dominant root will be generated at $-\sigma^{*}$. This point corresponds to the maximum achievable closed-loop exponential decay rate $\sigma^{*}$, and these arguments are consistent with previous discussions on the maximum achievable $\sigma$, see \citep{michiels} Theorem 7.6. Secondly, from Fig.~\ref{fig:regions_h0}, in contrast with the delayed case, it is clear that achieving an arbitrarily
large exponential decay $\sigma$ is possible in the absence of communication delays.

\begin{figure}
\begin{center}
 \subfigure[Delay-free case, $h=0$. Achievable decay is unbounded.]{
  \includegraphics[width=0.8\textwidth]{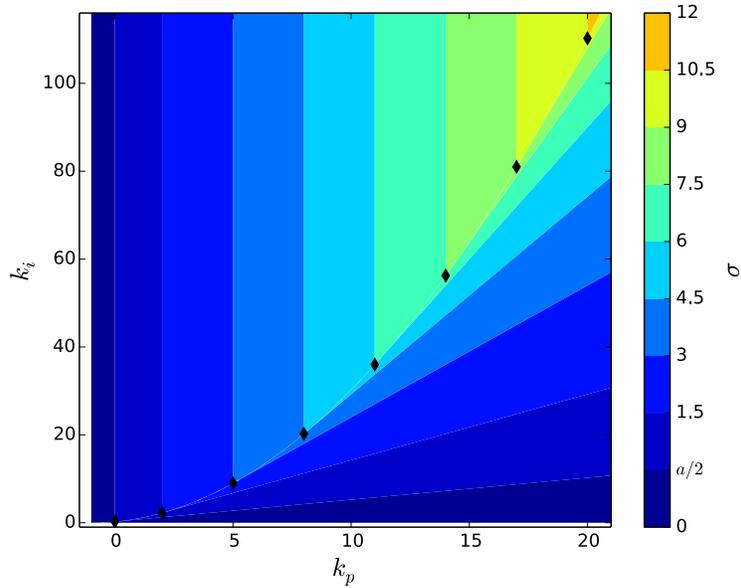}
  \label{fig:regions_h0}
 }
 \subfigure[With delays, $h=0.1$. Maximal achievable decay is $\sigma^\ast = 6.35$.]{
  \includegraphics[width=0.8\textwidth]{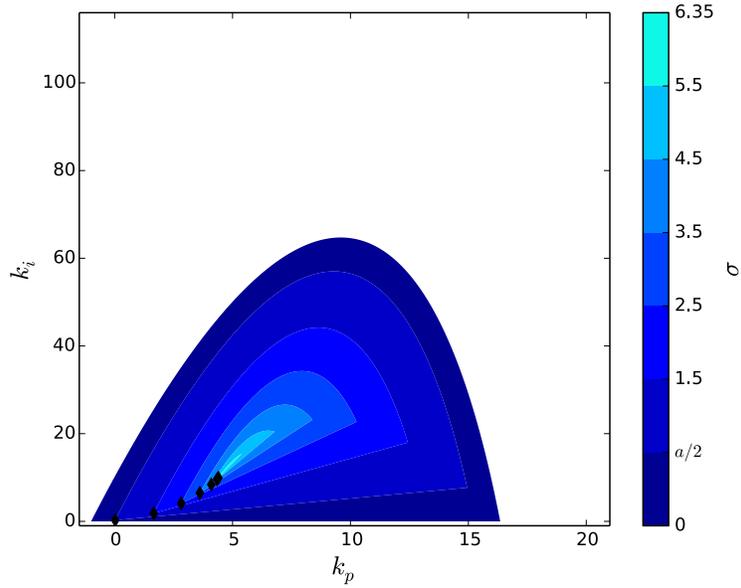}
  \label{fig:regions_hp}
 }
 \caption{$\sigma$-stability regions. Communication channel without scattering transformation.
  The diamond markers correspond to minimal gains ensuring a given $\sigma$.} \label{fig:noScat}
\end{center}
\end{figure}

Let us characterize the maximal achievable decay $\sigma^\ast$ for the delayed case. This characterization is then particularized to conclude on the relative stability properties of the delay-free case.

\begin{proposition} \label{prop:hp}
Consider a plant~\eqref{eq:plant} in closed-loop with a PI controller~\eqref{eq:PI}
satisfying $k_{p} \ge 0$ and $k_{i} \ge 0$. A delayed communication channel with round-trip delay
$h > 0$ stands between the system and the controller.
\begin{enumerate}
 \item[(i)] The maximal achievable exponential decay is given by
  \begin{equation} \label{eq:s*_hp}
   \sigma^\ast=\frac{4+ah-\sqrt{8+a^{2}h^{2}}}{2h} \;.
  \end{equation}
 \item[(ii)] The minimal PI controller gains assigning a given $\sigma^\ast \ge \sigma \ge a/2$
  are
  \begin{subequations}
  \begin{align}
   k_{p} &= \frac{\sigma ha_\sigma-a_{2\sigma}}{be^{h\sigma}} \;, \label{eq:kp_hpmin} \\
   k_{i} &= \frac{\sigma^2(ha_\sigma+1)}{be^{h\sigma}} \;, \label{eq:ki_hpmin}
  \end{align}
  \end{subequations}
  where condition $\sigma \ge a/2$ ensures that $k_p \ge 0$ for all $h > 0$.
\end{enumerate}
\end{proposition}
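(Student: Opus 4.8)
Since $(k_p,k_i)$ supplies two tunable real parameters, one can impose three independent spectral conditions, and by the multiplicity‑induced‑dominancy mechanism (cf.\ \citep[Thm.~7.6]{michiels}) the most stable admissible configuration is a real root of multiplicity three placed as far to the left as possible. I would therefore first require $s=-\sigma$ to be a triple root of \eqref{eq:den_hp}, i.e.\ $p(-\sigma)=p'(-\sigma)=p''(-\sigma)=0$, and solve for $(\sigma,k_p,k_i)$. Writing $N(s)=k_ps+k_i$ one has $p'(s)=2s+a+be^{-hs}\!\left[k_p-hN(s)\right]$ and $p''(s)=2-hbe^{-hs}\!\left[2k_p-hN(s)\right]$, so with the abbreviations $a_\sigma=a-\sigma$, $a_{2\sigma}=a-2\sigma$ the three conditions read, after cancelling $be^{h\sigma}$,
\begin{align*}
 (k_i-\sigma k_p)\,be^{h\sigma} &= \sigma a_\sigma \;, \\
 \bigl(k_p-h(k_i-\sigma k_p)\bigr)\,be^{h\sigma} &= -a_{2\sigma} \;, \\
 h\bigl(2k_p-h(k_i-\sigma k_p)\bigr)\,be^{h\sigma} &= 2 \;.
\end{align*}
The first line is \eqref{eq:kikp_hpe}; substituting it into the second gives \eqref{eq:kp_hpmin}, and feeding both into the first, using $a_\sigma-a_{2\sigma}=\sigma$, gives \eqref{eq:ki_hpmin}. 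Eliminating $k_p$ and $(k_i-\sigma k_p)$ from the third line collapses the system to the scalar quadratic $h^2\sigma^2-(ah^2+4h)\sigma+2ah+2=0$, whose discriminant reduces to $h^2(a^2h^2+8)$; the branch with the minus sign is \eqref{eq:s*_hp}, while the branch with the plus sign forces $k_p<0$ (seen from $h\,k_pbe^{h\sigma}=2+ah-2h\sigma$ on the quadratic) and is discarded. This proves (i).

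\textbf{Part (ii).} For a prescribed $\sigma$ the minimal gains should be the vertex of $\mathcal D_\sigma$, i.e.\ the common limit of the boundary \eqref{eq:kikp_hpe} and the curve \eqref{eq:kikp_hpp} as $\omega\to0^+$. Substituting $\cos(h\omega)\to1$, $\sin(h\omega)=h\omega+o(\omega)$ into \eqref{eq:kp_hpp}--\eqref{eq:ki_hpp} and cancelling $\omega$ sends this limit exactly to \eqref{eq:kp_hpmin}--\eqref{eq:ki_hpmin}; equivalently, the vertex satisfies the first two displayed equations above, hence assigns a double real root at $s=-\sigma$, which merges with a third real characteristic root into the triple root of part (i) as $\sigma\uparrow\sigma^\ast$. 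Since $\mathcal D_\sigma$ is the unbounded wedge bounded below by \eqref{eq:kikp_hpe} and on the left by \eqref{eq:kikp_hpp}, its vertex minimises $k_p$ and $k_i$ simultaneously. Nonnegativity then follows by a short sign analysis: from \eqref{eq:kp_hpmin}, $k_p\ge0$ iff $g(\sigma):=-h\sigma^2+(ah+2)\sigma-a\ge0$, and $g$ is a downward parabola with $g(a/2)=a^2h/4\ge0$ and $g(\sigma^\ast)=\tfrac{2}{h}+a-2\sigma^\ast>0$ (equivalently $\sigma^\ast<\tfrac{1}{h}+\tfrac{a}{2}$), so $g\ge0$ on $[a/2,\sigma^\ast]$; likewise $k_i\ge0$ there because $ha_\sigma+1=h(a-\sigma)+1>0$. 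Letting $h\to0$ recovers $k_p=2\sigma-a$, $k_i=\sigma^2$, consistent with the unbounded achievable decay of the delay‑free case.

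\textbf{Main obstacle.} The computations above are routine; the substantive point is \emph{completeness} — that the multiplicity‑three root at $s=-\sigma^\ast$ is actually the rightmost root of \eqref{eq:den_hp}, so that no admissible $(k_p,k_i)\ge0$ attains a decay beyond $\sigma^\ast$, and that $\mathcal D_\sigma$ is nonempty exactly for $\sigma\le\sigma^\ast$, degenerating to this single point at $\sigma=\sigma^\ast$. I expect to settle this either via multiplicity‑induced‑dominancy results for retarded quasipolynomials or, in keeping with the paper's method, by a $\mathcal D$-subdivision bookkeeping argument: fix a reference cell, compute the root‑crossing directions across \eqref{eq:kikp_hpe}--\eqref{eq:kikp_hpp}, and show the number of $\sigma$-unstable roots vanishes only inside $\mathcal D_\sigma$, which pinches shut precisely at $\sigma^\ast$.
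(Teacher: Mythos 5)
Your proposal is correct and follows essentially the same route as the paper: imposing $p(-\sigma)=p'(-\sigma)=p''(-\sigma)=0$ to locate the collapse of $\mathcal{D}_\sigma$ at a triple root for (i), and the double-root conditions (intersection of the real-root and complex-root boundaries, equivalently the $\omega\to 0$ limit of the parametric curves) for (ii). You actually supply more detail than the paper does — the explicit quadratic in $\sigma$, the sign argument discarding the $+$ branch, and the nonnegativity check on $[a/2,\sigma^\ast]$ — while the dominancy/completeness issue you flag is likewise not formally closed in the paper, which relies on the $\mathcal{D}$-subdivision continuity argument and the citation to Theorem 7.6 of \citet{michiels}.
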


\begin{proof}
\begin{enumerate}
 \item[(i)] For a given $\sigma$, the corresponding stability domain is delimited by the parametric equations corresponding to a
  real root at $s = -\sigma$ and a pair of complex roots $s = -\sigma \pm j\omega$
  of the quasipolynomial~\eqref{eq:den_hp}. As a consequence, the collapse of the $\sigma$-stability domain occurs at a triple root
  at $-\sigma$. Thus, the quasipolynomial~\eqref{eq:den_hp}, its first and its second derivatives
  must vanish at $s=-\sigma$. That is,
  \begin{subequations} \label{eq:triple_hp}
  \begin{align}
             -b\sigma e^{h\sigma}k_{p}+be^{h\sigma}k_{i} &= \sigma a_\sigma \;, \label{eq:den_hpS} \\
         be^{h\sigma}(1+h\sigma)k_{p}-hbe^{h\sigma}k_{i} &= -a_{2\sigma} \;, \label{eq:den_hpS'} \\
   -hbe^{h\sigma}(2+h\sigma)k_{p}+h^{2}be^{h\sigma}k_{i} &= -2\;.  \label{eq:den_hpS''}
  \end{align}
  \end{subequations}
  Solving these equations for $\sigma$ gives
  \begin{equation*}
   \sigma_{1,2} = \frac{4+ah\pm \sqrt{8+a^{2}h^{2}}}{2h} \;.
  \end{equation*}
  The solution that ensures $k_{p} \ge 0$ and $k_{i} \ge 0$ is~\eqref{eq:s*_hp}.
 \item[(ii)] The minimal gains occur at the intersection of the two boundaries. Hence, the conditions~\eqref{eq:den_hpS}
  and~\eqref{eq:den_hpS'} must hold. The result follows by solving these equations for $k_{p}$ and $k_{i}$.
\end{enumerate}
\end{proof}

We end this section with a comment on the performance degradation as a result of delays. To this end, let $h\rightarrow0$ in (\ref{eq:s*_hp}). It follows that the maximum achievable decay rate tends to infinite as the delay vanishes. From a practical point of view, an upper bound for $\sigma^{*}$ is solely determined by the physical limitations of the considered delay-free system. Moreover the minimal PI controller gains assigning a given $\sigma\geq a/2$ are given by $k_{p}$ in (\ref{eq:kp_hpmin}) and by $k_{i}$ in (\ref{eq:ki_hpmin}) with $h=0$.

Finally the rest of the paper is dedicated to the problems of (i) improving the relative stability of the system when challenged by the presence of communication delays (ii) recovering the passive properties of the overall control-loop and (iii) algebraically designing the controller gains to prescribe a desired exponential decay rate.

\section{The scattering transformation} \label{sec:scattering}

A classical approach to the study of transmission lines consists in applying
a linear transformation on the state variables~\citep{cheng}. By applying such transformation,
the transmission line equations, i.e., the telegrapher's equations, transform into
a pair of uncoupled delay equations, i.e., transport PDE. It is then possible
to understand the dynamics of the transmission line in terms of wave
propagation.

The reverse argument was proposed by~\citet{anderson1989}: Suppose we have
a communication channel consisting of a pair of delays. Apply the inverse
transformation to emulate the behaviour of a transmission line. Since
transmission lines are passive (lossless), the passivity argument is
restored.

More precisely, consider a pair of delays given by the transport PDE, see \citep{krstic} for applications of transport PDE to backstepping design,
\begin{equation}  \label{eq:delayh}
 \begin{pmatrix}
  s_l^+(l,t) \\ s_l^-(l,t)
 \end{pmatrix}
 =
 \begin{pmatrix}
  -h_{1} & 0 \\
       0 & h_{2}
 \end{pmatrix}
 \begin{pmatrix}
  s_t^+(l,t) \\
  s_t^-(l,t)
 \end{pmatrix} \;,
\end{equation}
where $s^{\pm}_{l}$ are the partial derivatives of the scattering variables $s^{\pm}$ with respect to the spatial variable $l\in[0,1]$.
Notice that, at the boundaries, the $s^{\pm}$ solutions satisfy $s^+(1,t) = s^+(0,t-h_{1})$
and $s^-(1,t) = s^-(0,t+h_{2})$. This is the communication channel. Consider now
the linear transformation
\begin{equation}  \label{eq:T}
 \begin{pmatrix}
  \mu(l,t) \\ \upsilon(l,t)
 \end{pmatrix}
 =
 \begin{pmatrix}
  1 & d \\
  1 & -d
 \end{pmatrix}^{-1}
 \begin{pmatrix}
  s^+(l,t) \\
  s^-(l,t)
 \end{pmatrix} \;,
\end{equation}
where $d > 0$ is a design parameter. It follows from~\eqref{eq:T} and~\eqref{eq:delayh}, with
$h=h_{1}+h_{2}$, in general $h_1 \neq h_2$, that
\begin{equation*}  \label{eq:telh}
 \begin{pmatrix}
  \mu_l(l,t) \\
  \upsilon_l(l,t)
 \end{pmatrix}
 = -\frac{h}{2}
 \begin{pmatrix}
    0 & d \\
  1/d & 0
 \end{pmatrix}
 \begin{pmatrix}
  \mu_t(l,t) \\
  \upsilon_t(l,t)
 \end{pmatrix}
 \;,
\end{equation*}
where $(\mu_{l},\nu_{l})$ and $(\mu_{t},\nu_{t})$ are  respectively the spatial and temporal partial derivatives of $(\mu,\nu)$. The above PDE corresponds to the (lossless) telegrapher's equations.

Transformation~\eqref{eq:T} can be enforced at the boundaries
of~\eqref{eq:delayh}. However, it is necessary to be cautious in respecting the
causality of the system: The variables $s^+_0 := s^+(0,t)$, $s^-_1 := s^-(1,t)$,
$\mu_0 := \mu(0,t)$ and $\upsilon_1 := \upsilon(1,t)$ are free, while
$s^+_1 := s^+(1,t)$, $s^-_0 := s^-(0,t)$, $\upsilon_0 := \upsilon(0,t)$
and $\mu_1 := \mu_1(1,t)$ depend on the former variables and their past
values. With these restrictions in mind, we can set, at the boundary $l = 0$,
\begin{subequations} \label{eq:Ts}
\begin{equation}
 \begin{pmatrix}
  \upsilon_0 \\ s^+_0
 \end{pmatrix}
 =
 \begin{pmatrix}
  1/d & -1/d \\
  2 & -1
 \end{pmatrix}
 \begin{pmatrix}
  \mu_0 \\ s^-_0
 \end{pmatrix} \;,
\end{equation}
(cf.~\eqref{eq:T} at $l = 0$). At $l = 1$ we have
\begin{equation}
 \begin{pmatrix}
  \mu_1 \\ s^-_1
 \end{pmatrix}
 =
 \begin{pmatrix}
  -d & 1 \\
 -2d & 1
 \end{pmatrix}
 \begin{pmatrix}
  \upsilon_1 \\ s^+_1
 \end{pmatrix} \;,
\end{equation}
\end{subequations}
(cf.~\eqref{eq:T} at $l = 1$). Here, equation (\ref{eq:Ts}) is known as the scattering transformation. Further details can be found
in~\citep{nuno2011,nuno2009,niemeyer2004,niemeyer1991}.
Finally, we can use the interconnection pattern
\begin{equation}  \label{eq:intD}
 u_0 = \upsilon_0 - y_1^\star \;, \quad \mu_0 = -y_0 \;, \quad u_1 = \mu_1
  \quad \text{and} \quad \upsilon_1 = y_1 \;,
\end{equation}
as in Fig.\ref{fig:ControlScheme}.

\begin{figure}
\begin{center}
 \includegraphics[width=0.80\linewidth]{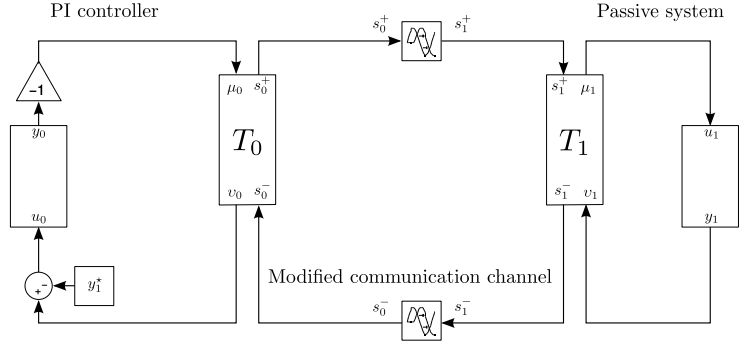}
\end{center}
\caption{A control-loop including a communication channel with the scattering transformation. }
\label{fig:ControlScheme}
\end{figure}

\subsection{Recovering stability and improving $\sigma$-stability}

When the scattering transformations are introduced as shown in Fig.~\ref{fig:ControlScheme},
and the parameter $d$ is arbitrary, the closed-loop transfer function is obtained from (\ref{eq:plant}), (\ref{eq:PI}) and (\ref{eq:Ts}) and using both the interconnection pattern (\ref{eq:intD}) and the equations of the delayed communication channel $s_{1}^{+}=s_{0}^{+}(t-h_{1})$ and $s_{0}^{-}=s_{1}^{-}(t-h_{2})$. Then, we have
\begin{displaymath}
 \frac{y_{1}(s)}{y_{1}^{\ast}(s)} =
  \frac{2d(k_{p}s+k_{i})be^{-h_{1}s}}{p_2(s)s^{2} + p_1(s)s + p_0(s)} \;,
\end{displaymath}
where
\begin{align*}
 p_2(s) &= (1+e^{-hs})d + (1-e^{-hs})k_{p} \;, \\
 p_1(s) &= (1+e^{-hs})(bk_p+a)d + (1-e^{-hs})(bd^2+ak_p+k_i) \;, \\
 p_0(s) &= (1+e^{-hs})bk_i d + (1-e^{-hs}) a k_i \;.
\end{align*}

The characteristic quasipolynomial is thus
\begin{equation}  \label{eq:den_scat}
 p(s) = p_2(s)s^{2} + p_1(s)s + p_0(s) \;.
\end{equation}
Notice that this quasipolynomial is of neutral type. In other words, its principal
coefficient, $p_2(s)$, contains exponential terms, see~\citep{bellman,kharitonov2001} for a discussion and analysis of
quasipolynomials. In the time domain, this means that the time derivative of the
state of the system does not only depend on delayed states, but also on the
derivative of the delayed state.

To determine the boundaries of the $\sigma$-stability region, we equate the
quasipolynomial~\eqref{eq:den_scat} to zero, set $s=-\sigma$ and solve for $k_i$ as a
function of $k_p$,
\begin{equation} \label{eq:kikp_scater_e}
 k_{i} = \sigma k_p + \sigma \frac{(1+e^{hs})a_\sigma + (1-e^{hs})bd}{(1-e^{hs})a_\sigma + (1+e^{hs})bd}d \;.
\end{equation}
Now, setting $s=-\sigma+j\omega$ and solving for $k_{p}$ and $k_{i}$  gives the implicit
parametric equations
\begin{equation} \label{eq:kikp_scater_p}
 A(\omega)
 \begin{pmatrix}
  k_{p} \\
  k_{i}
 \end{pmatrix}
 =
 d\cdot B(\omega) \;,
\end{equation}
where
\begin{align*}
 A_{11}(\omega) &= (\gamma\omega - \alpha\sigma)bd - \beta(\sigma a_\sigma + \omega^2) - \gamma\omega a_{2\sigma} \;, \\
 A_{12}(\omega) &= \alpha bd + \beta a_\sigma - \gamma w \;, \\
 A_{21}(\omega) &= (\gamma \sigma + \alpha \omega)bd - \gamma (\sigma a_\sigma + \omega^2) + \beta \omega a_{2\sigma} \;, \\
 A_{22}(\omega) &= -\gamma bd + \gamma a_\sigma + \beta \omega \;, \\
 B_1(\omega) &= (\beta \sigma+\gamma \omega)bd + \alpha (\sigma a_{\sigma} + \omega^2) - \gamma \omega a_{2\sigma} \;, \\
 B_2(\omega) &= (\gamma \sigma-\beta \omega)bd - \gamma (\sigma a_{\sigma} + \omega^2) - \alpha \omega a_{2\sigma} \;.
\end{align*}
and
\begin{align*}
 \alpha &= 1 + e^{h\sigma}\cos(h\omega) \;, \\
 \beta  &= 1 - e^{h\sigma}\cos(h\omega) \;, \\
 \gamma &= e^{h\sigma}\sin(h\omega) \;.
\end{align*}

Recall that a necessary condition for the stability of neutral-type delay systems is the
stability of the difference operator~\citep{haleVerduyn,michiels,olgac2004,olgac2008}, which is the
inverse Laplace transform of $p_2(s)$. When $\sigma=0$, the difference equation becomes
\begin{displaymath}
 \left( d+k_{p}\right) x(t)+\left( d-k_{p}\right) x(t-h) = 0 \;.
\end{displaymath}
Since
$\left\vert \left( d-k_{p}\right) /\left( d+k_{p}\right) \right\vert <1$, the
stability of the difference operator always holds. On the contrary, when $\sigma > 0$,
the characteristic equation of the difference operator becomes
$d+k_{p}+e^{-hs}e^{h\sigma}(d-k_{p})=0$. The necessity on the stability of the
difference operator imposes the new condition
\begin{displaymath}
 e^{h\sigma}\left|\frac{d-k_{p}}{d+k_{p}}\right| < 1
\end{displaymath}
or, equivalently,
\begin{equation} \label{eq:diff_op}
 d\frac{e^{h\sigma}-1}{e^{h\sigma}+1} < k_p < d\frac{e^{h\sigma}+1}{e^{h\sigma}-1} \;.
\end{equation}

The expressions~\eqref{eq:kikp_scater_e},~\eqref{eq:kikp_scater_p} and~\eqref{eq:diff_op}
are used to determine the $\sigma $-stability regions in the $(k_{p},k_{i},d)$-space of
parameters. These regions are shown in Fig.~\ref{fig:regions_3d}. For illustration
purposes, the slice $d = 15$ is also shown in Fig.~\ref{fig:regions_d15}.

\begin{remark}
The scattering transformation recovers the striking property, observed in the delay-free
case, that the whole first quadrant of the parameter space ensures stability of the closed-loop. Regarding $\sigma$-stability, the size of the regions in Fig.~\ref{fig:regions_d15} are
larger than those in Fig.~\ref{fig:regions_hp}. Also, the maximal achievable exponential decay,
$\sigma^\ast$, is greater with the scattering transformation than without it for
sufficiently large $d$.
\end{remark}

\begin{figure}
\begin{center}
 \subfigure[Regions of $\sigma$-stability in the $(k_{p},k_{i},d)$-space of parameters.
   The upper parts of the regions have been clipped so that inner regions corresponding
   to higher $\sigma$ are visible. The planes $d = k_p$ and $d = \zeta_{\min} k_p$ are
   shown in light gray.]{
  \includegraphics[width=0.8\textwidth]{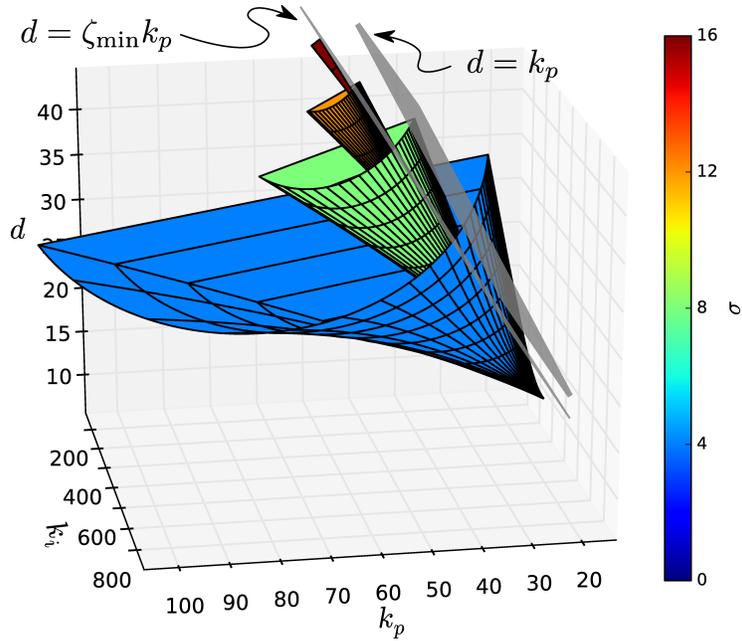}
  \label{fig:regions_3d}
 }
 \subfigure[$\sigma$-stability regions on the slice $d = 15$. The diamond markers correspond
   to the minimal gains that ensure a given $\sigma$. Maximal achievable decay is $\sigma^\ast_{d=15} = 10.9$.]{
  \includegraphics[width=0.8\textwidth]{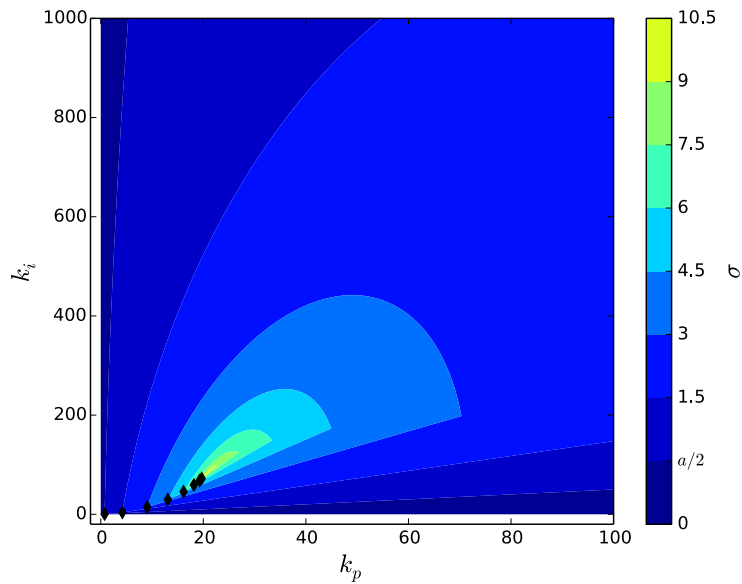}
  \label{fig:regions_d15}
 }
 \caption{Communication channel with delays, $h = 0.1$, and scattering transformation.} \label{fig:Scat}
\end{center}
\end{figure}

Again, we characterize the maximal achievable decay and the corresponding control gains.

\begin{proposition} \label{prop:scat_d}
Consider a plant~\eqref{eq:plant} in closed-loop with a PI controller~\eqref{eq:PI}
satisfying $k_{p} \ge 0$ and $k_{i} \ge 0$. A communication channel with round-trip delay
$h > 0$ stands between the system and the controller. For a given $d > 0$, apply the
scattering transformations~\eqref{eq:Ts} at the channel end points.
\begin{enumerate}
 \item[(i)] The maximal achievable decay $\sigma_{d}^{\ast}$ is a root $\sigma$ of
  $m_{d}(\sigma)$, where
  \begin{multline} \label{eq:s*_scat}
   m_d(\sigma) = (1+e^{h\sigma})\left(2hb^3d^3 + (h^2\sigma a_\sigma+4)b^2d^2 + 2h(\sigma^2-a_{2\sigma}^2)bd -
    h^2\sigma a_\sigma^3 \right) \\
    + (1-e^{h\sigma})\left( h^2\sigma b^3 d^3 + 2h(a+\sigma)b^2d^2 + (4a-h^2\sigma a_\sigma^2)bd - 2ha_\sigma^3 \right) \;
  \end{multline}
  such that the gains
  \begin{subequations} \label{eq:kikp_scat_smax}
  \begin{align}
   k_{p} &= d\frac{(bd-a_\sigma)^{2}e^{2h\sigma} + 2\sigma e^{h\sigma}\left(2bd+h(b^2d^2-a_\sigma^2)\right) - (bd+a_\sigma)^{2}}
    {\left(bd+a_\sigma+e^{h\sigma}(bd-a_\sigma)\right)^{2}} \;, \label{eq:kp_scat_smax} \\
   k_{i} &= 2d\sigma^2e^{h\sigma}\frac{2bd+h(b^2d^2-a_\sigma^2)}
    {\left(bd+a_\sigma+e^{h\sigma}(bd-a_\sigma)\right)^{2}} \;. \label{eq:ki_scat_smax}
  \end{align}
  \end{subequations}
  are non negative and such that~\eqref{eq:diff_op} holds.
 \item[(ii)] The minimal PI controller gains assigning a given $\sigma^\ast_d \ge \sigma > a/2$ are given by~\eqref{eq:kikp_scat_smax}.
\end{enumerate}
\end{proposition}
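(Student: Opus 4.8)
The plan is to mimic the structure of the proof of Proposition~\ref{prop:hp} but now for the neutral quasipolynomial~\eqref{eq:den_scat}. For part~(i), the maximal achievable decay $\sigma_d^\ast$ is again the value of $\sigma$ at which the $\sigma$-stability region $\mathcal{D}_\sigma$ collapses to a point, which (by the same $\mathcal{D}$-subdivision/continuity reasoning used earlier, and as confirmed numerically in Fig.~\ref{fig:regions_d15}) occurs when a triple real root appears at $s=-\sigma$. Thus I would impose that $p(s)$ from~\eqref{eq:den_scat}, together with $p'(s)$ and $p''(s)$, all vanish at $s=-\sigma$. Evaluating $p,p',p''$ at $s=-\sigma$ — here one must carefully differentiate the products $p_2(s)s^2$, etc., and use $\frac{d}{ds}e^{-hs}=-he^{-hs}$, noting $e^{hs}|_{s=-\sigma}=e^{-h\sigma}$ so that the combinations $1\pm e^{-hs}$ evaluate to $1\pm e^{h\sigma}$ up to the sign convention of $\alpha,\beta,\gamma$ — produces a linear system of three equations in the two unknowns $k_p,k_i$ with coefficients depending on $\sigma,h,a,b,d$.

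The next step is to treat this $3\times 2$ system as an overdetermined consistency problem: solve, say, the first two equations (which are exactly the triple-root analogues of~\eqref{eq:den_hpS},~\eqref{eq:den_hpS'} specialized to the neutral case) for $k_p$ and $k_i$ by Cramer's rule, obtaining precisely the closed forms~\eqref{eq:kp_scat_smax},~\eqref{eq:ki_scat_smax}; then substitute these into the third equation. Clearing denominators, the resulting consistency condition is a polynomial-in-$\sigma$-times-transcendental expression, and after collecting the $(1+e^{h\sigma})$ and $(1-e^{h\sigma})$ groupings it should reduce exactly to $m_d(\sigma)=0$ with $m_d$ as displayed in~\eqref{eq:s*_scat}. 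So $\sigma_d^\ast$ is identified as the relevant root of $m_d$. It then remains to verify the sign/feasibility claims: that the gains~\eqref{eq:kikp_scat_smax} are non-negative and that the difference-operator condition~\eqref{eq:diff_op} holds at $\sigma=\sigma_d^\ast$. For non-negativity of $k_i$ one checks that $2bd+h(b^2d^2-a_\sigma^2)>0$ at the relevant $\sigma$ (using $a_\sigma=a+... $ — here $a_\sigma$ denotes the shifted parameter, so $a_\sigma$ can be controlled relative to $bd$ for $d$ in the admissible range); for $k_p$, the numerator is a quadratic-in-$e^{h\sigma}$ that one argues is positive at the crossing; and~\eqref{eq:diff_op} should follow because a triple root on the boundary of a nonempty $\mathcal{D}_\sigma$ cannot lie outside the region where the difference operator is stable.

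For part~(ii), the argument is identical in spirit to part~(ii) of Proposition~\ref{prop:hp}: the minimal PI gains for a prescribed $\sigma$ are attained at the intersection of the two stability-crossing boundaries (the real-root boundary~\eqref{eq:kikp_scater_e} and the complex-root boundary~\eqref{eq:kikp_scater_p}), and this intersection — the ``lowest corner'' of $\mathcal{D}_\sigma$ in the first quadrant, marked by the diamonds in Fig.~\ref{fig:regions_d15} — is exactly where a double real root at $-\sigma$ forms, i.e. where $p(-\sigma)=0$ and $p'(-\sigma)=0$. Solving those two conditions for $(k_p,k_i)$ gives the same formulas~\eqref{eq:kikp_scat_smax}, now read as functions of the free parameter $\sigma$ rather than evaluated at the critical $\sigma_d^\ast$; the constraint $\sigma>a/2$ (together with $\sigma\le\sigma_d^\ast$) guarantees these gains stay non-negative, by the same estimate used in part~(i).

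The main obstacle will be the second paragraph: carrying out the elimination of $k_p,k_i$ from the three triple-root equations and recognizing that the consistency condition collapses \emph{exactly} into the stated $m_d(\sigma)$ — this is a substantial but purely algebraic computation (expansion of $3\times3$-type determinants in the variables $e^{h\sigma},\sigma,a_\sigma,bd,h$, then regrouping), and the bookkeeping of signs and the $1\pm e^{h\sigma}$ factors is where errors creep in. A secondary, more conceptual point needing care is justifying rigorously that the region collapse is governed by a \emph{triple} real root (and not, say, by a collision of the real boundary with the difference-operator boundary~\eqref{eq:diff_op}); for this I would invoke the same dominance/continuity argument and the cited Theorem~7.6 of~\citep{michiels}, plus the observation that~\eqref{eq:diff_op} is satisfied strictly throughout the interior, so the binding constraint at collapse is indeed the triple-root condition.
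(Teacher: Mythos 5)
Your proposal is correct and follows essentially the same route as the paper: impose the triple-root conditions $p(-\sigma)=p'(-\sigma)=p''(-\sigma)=0$ on the neutral quasipolynomial~\eqref{eq:den_scat}, solve the first two (a linear $2\times 2$ system) for $(k_p,k_i)$ to obtain~\eqref{eq:kikp_scat_smax}, substitute into the second-derivative condition to get $m_d(\sigma)=0$, and obtain part (ii) from the double root arising at the intersection of the boundaries~\eqref{eq:kikp_scater_e} and~\eqref{eq:kikp_scater_p}. The only minor difference is that the non-negativity of the gains and condition~\eqref{eq:diff_op} are, in the statement and in the paper, selection criteria used to pick the feasible root of $m_d$ (the triple-root condition being only necessary, candidate roots are checked by back-substitution), so your planned positivity and difference-operator verifications are not required and would not hold for every root in general.
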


\begin{proof}
\begin{enumerate}
 \item[(i)] As in the case where no scattering transformation is employed, the
  $\sigma$-stability regions collapse at a triple root at $s=-\sigma$.
  The fact that the quasipolynomial~\eqref{eq:den_scat} and its first derivative
  are zero give the conditions, written in compact form,
  \begin{equation} \label{eq:double_scat}
   \begin{pmatrix}
    A_{11} & A_{12} \\
    A_{21} & A_{22}
   \end{pmatrix}
   \begin{pmatrix}
    k_p \\ k_i
   \end{pmatrix}
   = d\cdot
   \begin{pmatrix}
    B_1 \\ B_2
   \end{pmatrix} \;,
  \end{equation}
  where
  \begin{align*}
   A_{11} &= -(1+e^{h\sigma})\sigma bd - (1-e^{h\sigma})\sigma a_\sigma  \;, \\
   A_{12} &= (1+e^{h\sigma})bd + (1-e^{h\sigma}) a_\sigma \;, \\
   A_{21} &= bd+a_{2\sigma}+e^{h\sigma}\left((bd-a_\sigma)(h\sigma+1)+\sigma\right) \;, \\
   A_{22} &= 1-e^{h\sigma}(h(bd-a_\sigma)+1) \;, \\
  \end{align*}
  and
  \begin{align*}
   B_1 &= (1-e^{h\sigma})\sigma bd + (1+e^{h\sigma})\sigma a_\sigma \;, \\
   B_2 &= -(bd+a_{2\sigma}) + e^{h\sigma}\left((bd-a_\sigma)(h\sigma+1)+\sigma\right) \;. \\
  \end{align*}
  The solution of this linear system of equations is given by~\eqref{eq:kikp_scat_smax}.
  Computing the second derivative of~\eqref{eq:den_scat} gives
  \begin{equation} \label{eq:second_scat}
   \begin{pmatrix}
    A_{31} & A_{32}
   \end{pmatrix}
   \begin{pmatrix}
    k_p \\ k_i
   \end{pmatrix} = d \cdot B_3 \;,
  \end{equation}
  where
  \begin{align*}
   A_{31} &= \left(2-e^{h\sigma}\left((h(bd-a_\sigma)+1)(h\sigma+2)+h\sigma\right)\right) \;, \\
   A_{32} &= he^{h\sigma}(h(bd-a_\sigma)+2) \;, \\
      B_3 &= -\left(e^{h\sigma}\left((h(bd-a_\sigma)+1)(h\sigma+2)+h\sigma\right)+2\right) \;.
  \end{align*}
  Substituting~\eqref{eq:kikp_scat_smax} into~\eqref{eq:second_scat} gives the implicit
  equation $m_d(\sigma) = 0$.
  Then, $\sigma_{d}^{\ast}$ is given as a root of $m_{d}(\sigma)$ satisfying the
  non negative condition on the PI control gains and the necessary stability condition~\eqref{eq:diff_op}
  associated to the neutral nature of the quasipolynomial.
 \item[(ii)] The minimal control gains occur at the intersection of two boundaries. The result follows by noting that at this intersection a double root of the quasipolynomial~\eqref{eq:den_scat} arises, in other words, when~\eqref{eq:kikp_scat_smax} holds.
\end{enumerate}
\end{proof}

Observe that the vanishing of the quasipolynomial~\eqref{eq:den_scat} and
its first two derivatives are necessary conditions for the $\sigma$-stability regions to
collapse. Therefore, the roots of~\eqref{eq:s*_scat} must be verified via back substitution
into the solution of equation~\eqref{eq:double_scat}. Then, if $k_{p} \ge 0$, $k_{i} \ge 0$
and~\eqref{eq:diff_op} hold, the $\sigma_{d}^{\ast}$-stability of the detected collapse point
has to be checked for $\sigma_{d}^{\ast}$ to be feasible.

\begin{remark}
 It follows from the implicit function theorem that $\sigma^\ast_d$ can also be
 characterized as the value of $\sigma$ at which the derivative of~\eqref{eq:kp_scat_smax}
 with respect to $\sigma$ is equal to zero.
\end{remark}

\subsection{Least upper bound on the exponential decay rate}
To provide an idea of how restrictive~\eqref{eq:diff_op} is, we have computed the minimal
gains~\eqref{eq:kikp_scat_smax} and plotted them in Fig.~\ref{fig:kp_min}. Notice that,
when the minimal gains are used, the upper bound in~\eqref{eq:diff_op} is never infringed.
The restriction $\sigma > a/2$, on the other hand, ensures the lower bound.

It is clear from Fig.~\ref{fig:kp_min} that, as $d \to \infty$, the maximal exponential
decays $\sigma^\ast_d$ accumulate at a point, which we denote by $\sigma_{\sup}$. This is
formalized in the following proposition.

\begin{figure}
\begin{center}
 \includegraphics[width=0.80\linewidth]{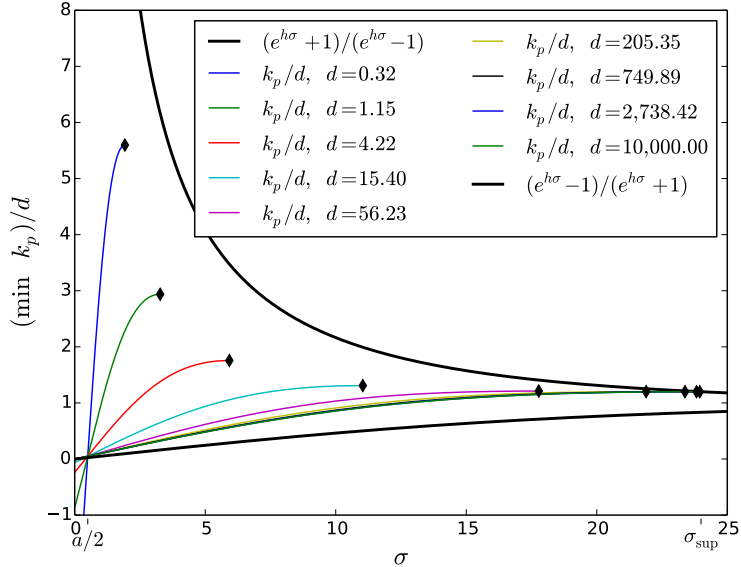}
\end{center}
\caption{Minimal $k_p$ as a function of $\sigma$~\eqref{eq:kp_scat_smax}, normalized by $d$.
 The bounds~\eqref{eq:diff_op} are also shown. The diamond markers correspond to $\sigma^\ast_d$,
 in accordance with~\eqref{eq:s*_scat}. Notice that $k_p(\sigma^\ast_d)/d$ converges to a constant
 value as $d \to \infty$.}
\label{fig:kp_min}
\end{figure}

\begin{proposition}
 The maximal exponential decays $\sigma^\ast_d$ are bounded by $\sigma_{\sup}$, which is defined
 implicitly by
 \begin{equation} \label{eq:s_sup}
  2(1+e^{h\sigma_{\sup}}) + h\sigma_{\sup} (1-e^{h\sigma_{\sup}}) = 0 \;.
 \end{equation}

\end{proposition}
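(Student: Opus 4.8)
The plan is to analyse the defining relation $m_d(\sigma)=0$ of Proposition~\ref{prop:scat_d}(i) in the regime $d\to\infty$, where the algebra degenerates to exactly~\eqref{eq:s_sup}, and then to promote the resulting limit into an honest bound. I would not work directly with $m_d$ but with the equivalent description in the Remark after Proposition~\ref{prop:scat_d}: $\sigma^\ast_d$ is the value of $\sigma$ at which $\mathrm{d}k_p/\mathrm{d}\sigma=0$, with $k_p(\sigma)$ as in~\eqref{eq:kp_scat_smax}. Dividing~\eqref{eq:kp_scat_smax} by $d$ and letting $d\to\infty$ with $a,b,h$ fixed, only the top-degree-in-$d$ terms survive in numerator and denominator, giving
\begin{equation*}
 \frac{k_p(\sigma)}{d}\;\longrightarrow\;\kappa(\sigma):=\frac{e^{2h\sigma}+2h\sigma e^{h\sigma}-1}{\bigl(1+e^{h\sigma}\bigr)^2}\;,
\end{equation*}
with the convergence uniform, together with that of $\mathrm{d}/\mathrm{d}\sigma$, on compact subintervals of $(0,\infty)$. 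A direct differentiation shows the numerator of $\kappa'(\sigma)$ equals a strictly positive factor times $2(1+e^{h\sigma})+h\sigma(1-e^{h\sigma})$, so $\kappa'(\sigma)=0$ \emph{is} equation~\eqref{eq:s_sup}.

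Next I would establish that $\sigma_{\sup}$ is well defined and is the unique critical point of $\kappa$. Put $g(\sigma)=2(1+e^{h\sigma})+h\sigma(1-e^{h\sigma})$; dividing by $e^{h\sigma/2}$ one gets $g(\sigma)=-4e^{h\sigma/2}\psi(\sigma)$ with $\psi(\sigma):=\tfrac{h\sigma}{2}\sinh\tfrac{h\sigma}{2}-\cosh\tfrac{h\sigma}{2}$, and since $\psi'(\sigma)=\tfrac{h^2\sigma}{4}\cosh\tfrac{h\sigma}{2}>0$, the function $\psi$ increases strictly from $\psi(0)=-1$ to $+\infty$, hence has a single zero. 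Therefore $g$ has a unique positive zero $\sigma_{\sup}$, with $g>0$ on $[0,\sigma_{\sup})$ and $g<0$ on $(\sigma_{\sup},\infty)$; since the numerator of $\kappa'$ carries the sign of $g$, the map $\kappa$ strictly increases on $(0,\sigma_{\sup})$ and strictly decreases on $(\sigma_{\sup},\infty)$, so $\sigma_{\sup}$ is its only critical point (a global maximiser). Feeding this back into the $C^1$ convergence of $k_p(\cdot)/d$, the critical points $\sigma^\ast_d$ converge to $\sigma_{\sup}$, so $\sigma_{\sup}$ is an accumulation point of $\{\sigma^\ast_d\}_{d>0}$.

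It then remains to exclude $\sigma^\ast_d>\sigma_{\sup}$ for finite $d$. The plan here is to prove $d\mapsto\sigma^\ast_d$ is nondecreasing, so that $\sigma^\ast_d\le\lim_{d\to\infty}\sigma^\ast_d=\sigma_{\sup}$. I would do this by applying the implicit function theorem to $m_d(\sigma^\ast_d)=0$ — writing $m_d$ as a cubic in $d$ whose leading coefficient is $b^3h\,g(\sigma)$ — obtaining $\mathrm{d}\sigma^\ast_d/\mathrm{d}d=-(\partial m_d/\partial d)/(\partial m_d/\partial\sigma)$ and checking that the right-hand side is $\ge0$ along the branch selected by feasibility ($k_p\ge0$, $k_i\ge0$, and~\eqref{eq:diff_op}); equivalently, since $\sigma^\ast_d$ maximises $\kappa_d(\sigma):=k_p(\sigma)/d$, it suffices to show $\partial_\sigma\kappa_d(\sigma_{\sup})\le0$ for every $d>0$, which after using $g(\sigma_{\sup})=0$ should collapse to a manifestly signed expression. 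A useful consistency check is that, in the limit, $\kappa(\sigma)$ meets the upper neutral bound $\coth(h\sigma/2)$ of~\eqref{eq:diff_op} \emph{precisely} at $\sigma_{\sup}$: writing $\kappa(\sigma)=\tanh(h\sigma/2)+\tfrac{h\sigma}{2}/\cosh^2(h\sigma/2)$ one sees $\kappa-\coth(h\sigma/2)$ carries the sign of $\psi$, so no decay beyond $\sigma_{\sup}$ survives the neutrality constraint. Combining the two previous steps then yields $\sigma_{\sup}=\sup_{d>0}\sigma^\ast_d$.

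The main obstacle is precisely this last step: the $d\to\infty$ computation is clean, but turning it into a uniform-in-$d$ bound requires the monotonicity sign $\mathrm{d}\sigma^\ast_d/\mathrm{d}d\ge0$ (or the one-sided estimate $\partial_\sigma\kappa_d(\sigma_{\sup})\le0$), and that must be verified while keeping track of which root of $m_d$ is the admissible one — i.e. while keeping $k_p\ge0$, $k_i\ge0$ and~\eqref{eq:diff_op} in force along the whole curve $d\mapsto(\sigma^\ast_d,k_p,k_i)$, since spurious roots of $m_d$ with $\sigma>\sigma_{\sup}$ do exist but are ruled out by these feasibility conditions.
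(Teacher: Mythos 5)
Your core computation is sound and, in substance, it is the paper's own argument in a different dress. The paper's entire proof consists of the observation that $m_d(\sigma)$ from \eqref{eq:s*_scat} is a cubic in $d$ and is asymptotically equivalent, as $d\to\infty$, to its leading term $\left[2(1+e^{h\sigma})+h\sigma(1-e^{h\sigma})\right]hb^3d^3$, whose zero defines $\sigma_{\sup}$ via \eqref{eq:s_sup}. You reach the same limiting equation by normalizing \eqref{eq:kp_scat_smax} by $d$ and locating the critical point of the limit function $\kappa$; since $m_d(\sigma)=0$ was obtained in Proposition~\ref{prop:scat_d} precisely by eliminating $(k_p,k_i)$ from the triple-root conditions, the two routes are equivalent, and your extra observations (uniqueness of the positive zero of $g$, monotonicity of $\kappa$ on either side of it, and the fact that $\kappa$ meets the neutral bound $\coth(h\sigma/2)$ from \eqref{eq:diff_op} exactly at $\sigma_{\sup}$) are genuine refinements that the paper does not state.

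Where your proposal is incomplete is exactly where you say it is: promoting the $d\to\infty$ limit to the uniform statement $\sigma^\ast_d\le\sigma_{\sup}$ for every finite $d$ is left as a plan. The sign condition $\mathrm{d}\sigma^\ast_d/\mathrm{d}d\ge 0$ (equivalently $\partial_\sigma\kappa_d(\sigma_{\sup})\le 0$ along the admissible branch, i.e.\ the branch respecting $k_p\ge0$, $k_i\ge0$ and \eqref{eq:diff_op}) is never actually verified, and that verification is the only nontrivial content of the word ``bounded'' in the statement. You should know, however, that the paper does not close this gap either: its proof is the one-sentence asymptotic equivalence above, which strictly speaking only shows that the feasible roots of $m_d$ accumulate at $\sigma_{\sup}$ as $d\to\infty$ (consistent with the discussion around Fig.~\ref{fig:kp_min}); boundedness for finite $d$ is effectively read off the figure. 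So the portion you completed already matches the published argument, and the monotonicity step you flag as the main obstacle would, if carried out, prove something strictly stronger than what the paper itself establishes.
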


\begin{proof}
 Notice that $m_d(\sigma)$ is asymptotically equivalent to
 \begin{displaymath}
  \left[2(1+e^{h\sigma}) + h\sigma (1-e^{h\sigma})\right]hb^3d^3
 \end{displaymath}
 as $d \to \infty$, the roots of which are given by~\eqref{eq:s_sup}.
\end{proof}

 Notice that $\sigma_{\sup}$ is independent of the system parameters: it depends on $h$ only. In our example we have $h = 0.1$, which gives $\sigma_{\sup} = 23.99$ and agrees with Fig.~\ref{fig:kp_min}.

The notion of \emph{impedance matching}, from the theory of transmission lines, suggests the choice
$d = k_p$ in the scattering transformation~\eqref{eq:Ts}. The rationale behind this choice is that,
in a real transmission line, it avoids wave reflections~\citep{niemeyer2004}. From a frequency-domain
perspective, the immediate advantage of this choice is that the characteristic
quasipolynomial~\eqref{eq:den_scat} simplifies substantially, as the coefficients become
\begin{align*}
 p_2(s) &= 2k_{p} \;, \\
 p_1(s) &= 2k_p(a+bk_p)+(1-e^{-hs})k_i \;, \\
 p_0(s) &= (1+e^{-hs})bk_p k_i  + (1-e^{-hs}) a k_i \;.
\end{align*}
The principal coefficient becomes constant, i.e., the closed-loop
system is of retarded nature instead of neutral, which obviates the need to verify the stability of the difference operator given by the additional constraint~\eqref{eq:diff_op}.

The restriction $d = k_p$ corresponds to the plane shown in Fig.~\ref{fig:regions_3d}. Notice that,
while it is a reasonable choice because of the arguments given above, the plane fails to intersect many
$\sigma$-stability surfaces. Thus, the choice $d = k_p$ is not optimal in the sense that it obstructs
the achievement of $\sigma_{\sup}$-stability. However, notice from Fig.~\ref{fig:kp_min}
that the normalized minimal gain, $k_p(\sigma^\ast_d)/d$, converges to a constant value as $d \to \infty$.
This fact suggests the more general linear relation
\begin{equation} \label{eq:z}
 d = \zeta k_p \;,
\end{equation}
where $\zeta > 0$ is a new design parameter. In the reminder of this section, we will show that there exists
a privileged value of $\zeta$, which we call $\zeta_{\min}$ and that optimizes the bound on $\sigma$.

Let us first compute the $\sigma$-stability boundaries on the plane given by~\eqref{eq:z}. The boundaries
associated to the real roots $s = -\sigma$ are simply given by~\eqref{eq:kikp_scater_e} and~\eqref{eq:z}.
On the other hand, solving~\eqref{eq:kikp_scater_p} with $d = \zeta k_p$ is slightly more difficult, since
$A(\omega)$ and $B(\omega)$ now depend on $k_p$. It follows from Cramer's rule that
\begin{equation} \label{eq:cram}
 \left|A(\omega)\right| k_p + d \left|
  \begin{pmatrix}
   A_2(\omega) & B(\omega)
  \end{pmatrix} \right| = 0 \;,
\end{equation}
where $A_2(\omega)$ stands for the second column of $A(\omega)$ and $|\cdot|$ stands for the
determinant. Substituting~\eqref{eq:z} into~\eqref{eq:cram} gives the equation
\begin{displaymath}
 \left|A(\omega)\right| + \zeta \left|\begin{pmatrix} A_2(\omega) & B(\omega) \end{pmatrix} \right| = 0 \;.
\end{displaymath}
Developing this equation explicitly leads to the second order polynomial equation
$c_2 b^2 \zeta^2 k_p^2 + c_1 b \zeta k_p + c_0 = 0$ with
\begin{align*}
 c_2 &= \left( (\zeta-1)e^{2h\sigma}-(\zeta+1)-2e^{h\sigma}\cos(h\omega) \right)\omega + 2\zeta e^{h\sigma}\sin(h\omega)\sigma \;, \\
 c_1 &= 2\left( \left( -(\zeta-1)e^{2h\sigma} - (\zeta + 1) \right)a_\sigma+2e^{h\sigma}\left( \zeta\sigma\cos(h\omega) + \omega\sin(h\sigma)\right) \right)\omega \;, \\
 c_0 &= \left( \omega^2 + a_\sigma^2 \right)\big( \left( e^{2h\sigma}(\zeta-1)-(\zeta+1) \right)\omega + 2e^{2h\sigma} \left(\omega\cos(h\omega)-\zeta\sigma\sin(h\omega) \right) \big) \;.
\end{align*}
The roots of this polynomial can be computed explicitly. Finally, $k_i$ can be computed as
\begin{displaymath}
 k_i = A_{12}^{-1}(\omega)\left( dB_1(\omega) - A_{11}(\omega)k_p \right)
\end{displaymath}
with~\eqref{eq:z}.

Fig.~\ref{fig:regions_d_kp} shows the $\sigma$-stability regions for the plane
$d = k_p$. Notice that, as in Fig.~\ref{fig:regions_h0}, the boundaries are not given by closed curves.
This is a clear advantage with respect to the case in which $d$ is constant (cf. Fig.~\ref{fig:regions_d15}).
Unfortunately, the choice $d = \zeta k_p$ still imposes a bound on the exponential decay, as it will
be shown shortly.

\begin{figure}
\begin{center}
 \subfigure[With $\zeta = 1$. The exponential decay is bounded by $\sigma^\ast_{\zeta = 1} = 12.78$.]{
  \includegraphics[width=0.8\textwidth]{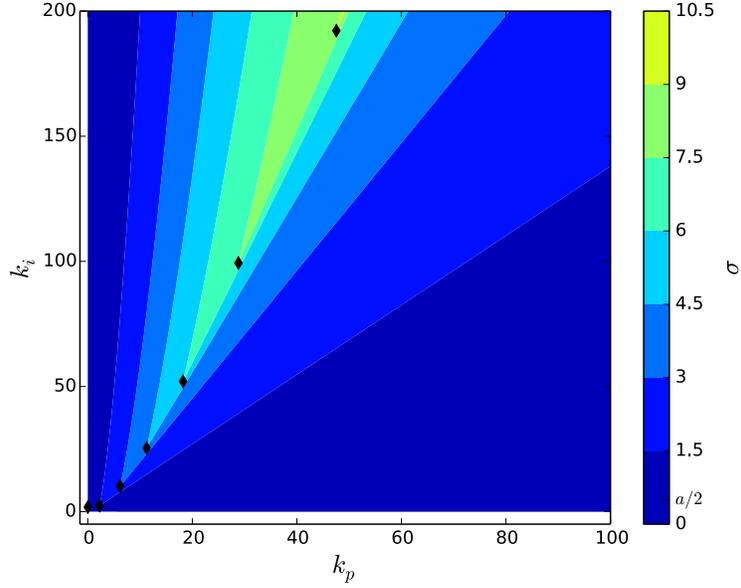}
  \label{fig:regions_d_kp}
 }
 \subfigure[With $\zeta = \zeta_{\min}$. The exponential decay is bounded by
   $\sigma^\ast_{\zeta = \zeta_{\min}} = \sigma_{\sup} = 23.99$.]{
   \includegraphics[width=0.8\textwidth]{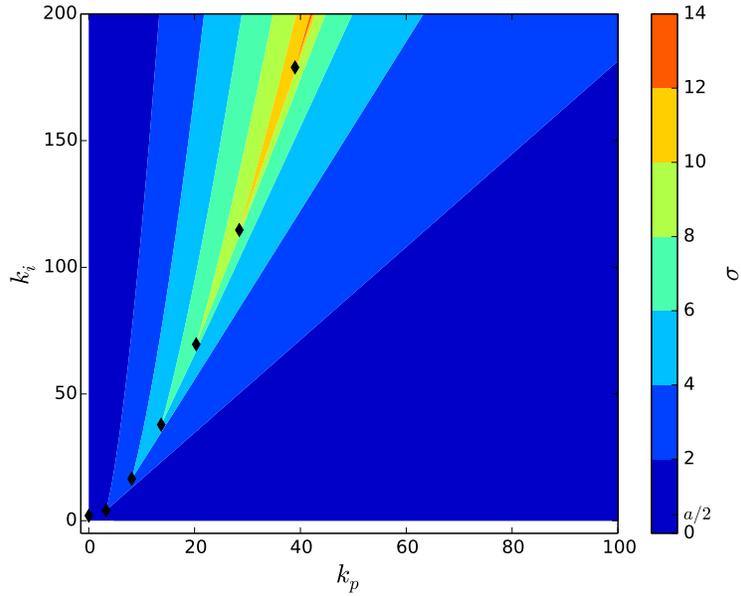}
   \label{fig:regions_d_zkp}
 }
\caption{$\sigma$-stability regions on the plane $d = \zeta k_p$. Communication channel with delay,
 $h = 0.1$, and scattering transformation. The diamond markers correspond to the minimal gains
 that ensure a given $\sigma$.} \label{fig:scatZ}
\end{center}
\end{figure}

\begin{definition}
 Let $\eta_{\sup}$ be the positive solution of
 \begin{equation} \label{eq:eta_min}
  2(1+e^{\eta_{\sup}})+\eta_{\sup}(1-e^{\eta_{\sup}}) = 0
 \end{equation}
 and let $\zeta_{\min}$ be defined as
 \begin{equation*} \label{eq:zeta_eta}
  \zeta_{\min} = \frac{(1+e^{\eta_{\sup}})^2}{2\eta_{\sup} e^{\eta_{\sup}}-(1+e^{\eta_{\sup}})(1-e^{\eta_{\sup}})} \;.
 \end{equation*}
\end{definition}

\begin{proposition} \label{prop:scat_z}
Consider a plant~\eqref{eq:plant} in closed-loop with a PI controller~\eqref{eq:PI}
satisfying $k_{p} \ge 0$ and $k_{i} \ge 0$. A communication channel with round-trip delay
$h > 0$ stands between the system and the controller. Set $d = \zeta k_p$ with $\zeta > 0$
and apply the scattering transformations~\eqref{eq:Ts} at the channel end points.
\begin{enumerate}
 \item[(i)] The least upper bound on the exponential decay is
  \begin{equation*} \label{eq:s*_z}
   \sigma^\ast_\zeta =
    \begin{cases}
     \frac{1}{h}\ln\left(\frac{1+\zeta}{1-\zeta}\right) & \text{if} \quad 0 < \zeta < \zeta_{\min} \\
     \sigma > 0 \text{ such that } m_\zeta(h\sigma) = 0 & \text{if} \quad \zeta_{\min} \le \zeta
    \end{cases} \;,
  \end{equation*}
  where
  \begin{equation} \label{eq:m_z}
   m_\zeta(\eta) = (1+e^{\eta})\left( \zeta(1-e^{\eta}) + 1+e^{\eta} \right)-2\zeta \eta e^{\eta}\;.
  \end{equation}
 \item[(ii)] The minimal $k_p$ assigning a given $\sigma^\ast_\zeta > \sigma > a/2 $ is a root of the
  second order polynomial $c_2 b^2 \zeta^2 k_p^2 + c_1 b \zeta k_p + c_0$ with
  \begin{align*}
   c_2 &= (1+e^{h\sigma})\left( \zeta(1-e^{h\sigma}) + 1+e^{h\sigma} \right)-2\zeta h\sigma e^{h\sigma} \;, \\
   c_1 &= 2(1+e^{h\sigma})\left( \zeta(1+e^{h\sigma}) + 1-e^{h\sigma} \right)a_\sigma-4\zeta a e^{h\sigma} \;, \\
   c_0 &= \left( (1-e^{h\sigma})\left( \zeta(1+e^{h\sigma}) + 1-e^{h\sigma} \right) + 2\zeta h\sigma e^{h\sigma} \right)a_{\sigma}^2 \;,
  \end{align*}
  while the minimal $k_i$ is given by
  \begin{displaymath}
   k_i = \frac{(1+e^{h\sigma})\zeta(bk_p+a_{\sigma}) + (1-e^{h\sigma})(\zeta^2bk_p+a_{\sigma})}{(1+e^{h\sigma})\zeta b k_p+(1-e^{h\sigma})a_\sigma}\sigma k_p \;.
  \end{displaymath}
\end{enumerate}
\end{proposition}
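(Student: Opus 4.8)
The strategy is to locate the two obstructions to $\sigma$-stability on the plane $d=\zeta k_p$ — loss of stability of the neutral difference operator, governed by~\eqref{eq:diff_op}, and collapse of the domain $\mathcal{D}_\sigma$ in the $(k_p,k_i)$ plane — and to show that $\sigma^\ast_\zeta$ is the smaller of the two associated critical values, with the crossover occurring exactly at $\zeta=\zeta_{\min}$.

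\emph{Part (ii).} Following Propositions~\ref{prop:hp} and~\ref{prop:scat_d}, the minimal gains sit at the corner of $\mathcal{D}_\sigma$ where the real-root boundary~\eqref{eq:kikp_scater_e} meets a complex-root boundary~\eqref{eq:kikp_scater_p}, which is algebraically a double root of~\eqref{eq:den_scat} at $s=-\sigma$. Substituting $d=\zeta k_p$ into~\eqref{eq:den_scat} makes $p_2$ linear, $p_0$ bilinear and $p_1$ quadratic in $k_p$, all three affine in $k_i$. The equation $p(-\sigma)=0$ is then affine in $k_i$, and solving it gives the stated expression for $k_i$ as a function of $k_p$. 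Feeding this into $p'(-\sigma)=0$ (derivative in $s$, evaluated at $s=-\sigma$) and clearing denominators produces the quadratic $c_2b^2\zeta^2k_p^2+c_1b\zeta k_p+c_0=0$, whose coefficients match the claimed $c_0,c_1,c_2$ after a direct expansion. The minimal $k_p$ is the smaller nonnegative root; the hypothesis $\sigma>a/2$ secures $k_p\ge0$ (exactly as in Proposition~\ref{prop:hp}), while $\sigma<\sigma^\ast_\zeta$ secures $k_i\ge0$ and compatibility with~\eqref{eq:diff_op}.

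\emph{Part (i).} Substituting $d=\zeta k_p$ into~\eqref{eq:diff_op} and cancelling $k_p$, the two inequalities become $\zeta(e^{h\sigma}-1)<e^{h\sigma}+1$ and $(1-\zeta)e^{h\sigma}<1+\zeta$; for $\zeta\ge1$ both hold for every $\sigma>0$, while for $0<\zeta<1$ the binding one is the second, which fails precisely at $\sigma=\tfrac1h\ln\tfrac{1+\zeta}{1-\zeta}$ — the first candidate. For the collapse, the key remark is that the leading coefficient of the part-(ii) quadratic equals $c_2=m_\zeta(h\sigma)$, with $m_\zeta$ as in~\eqref{eq:m_z}: while $m_\zeta(h\sigma)\ne0$ the minimal-gain locus stays finite, and as $m_\zeta(h\sigma)\to0$ the quadratic degenerates and one branch of $\partial\mathcal{D}_\sigma$ escapes to infinity, so that $\mathcal{D}_\sigma$ collapses — the same triple-root mechanism as in Proposition~\ref{prop:scat_d}, to be confirmed by back-substitution as there; the second candidate is therefore the smallest positive root of $m_\zeta(h\sigma)=0$. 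To order the two candidates, rewrite $m_\zeta(\eta)=(\zeta+1)+2e^{\eta}+(1-\zeta)e^{2\eta}-2\zeta\eta e^{\eta}$, note that $\partial_\zeta m_\zeta(\eta)=1-e^{2\eta}-2\eta e^{\eta}<0$ for $\eta>0$, and reduce~\eqref{eq:eta_min} together with the displayed formula for $\zeta_{\min}$ to the equivalent statements $\zeta_{\min}\ln\tfrac{1+\zeta_{\min}}{1-\zeta_{\min}}=2$ and $m_{\zeta_{\min}}(\eta_{\sup})=m_{\zeta_{\min}}'(\eta_{\sup})=0$. Since $\eta\mapsto m_\zeta(\eta)e^{-\eta}$ is convex for $\zeta\le1$, this last tangency forces $m_{\zeta_{\min}}\ge0$ on $[0,\infty)$, and then monotonicity in $\zeta$ yields $m_\zeta>0$ on $[0,\infty)$ when $\zeta<\zeta_{\min}$ — so only the neutral obstruction acts and $\sigma^\ast_\zeta=\tfrac1h\ln\tfrac{1+\zeta}{1-\zeta}$ — and a positive zero of $m_\zeta$ when $\zeta>\zeta_{\min}$. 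For the latter, evaluating $m_\zeta$ at $\eta=\ln\tfrac{1+\zeta}{1-\zeta}$ (when $\zeta<1$) gives $m_\zeta\big(\ln\tfrac{1+\zeta}{1-\zeta}\big)=\tfrac{2(1+\zeta)}{1-\zeta}\big(2-\zeta\ln\tfrac{1+\zeta}{1-\zeta}\big)\le0$, so the collapse occurs at or before the neutral boundary; for $\zeta\ge1$ the neutral obstruction is absent — in both subcases $\sigma^\ast_\zeta$ is the smallest positive root of $m_\zeta(h\sigma)=0$. At $\zeta=\zeta_{\min}$ the two formulas coincide and both return $\sigma_{\sup}$.

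The routine part is the elimination and coefficient-matching in part (ii). The main obstacle is to justify the collapse criterion rigorously — that, as $\sigma$ increases, $\mathcal{D}_\sigma$ cannot become empty for any reason other than the two identified (for instance the quadratic losing its real roots, or a different crossing of the real- and complex-root boundaries) — which calls for the $\mathcal{D}$-subdivision and continuity reasoning already used in Propositions~\ref{prop:hp} and~\ref{prop:scat_d}, so that $\sigma^\ast_\zeta$ is genuinely the first value at which $\sigma$-stability is lost.
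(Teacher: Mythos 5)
Your overall route coincides with the paper's: part (ii) from the double-root conditions $p(-\sigma)=p'(-\sigma)=0$ with $d=\zeta k_p$ (the $k_i$ formula from the affine equation $p(-\sigma)=0$, the quadratic in $k_p$ from $p'(-\sigma)=0$), and part (i) from the two obstructions — the difference-operator condition \eqref{eq:diff_op} and the degeneration of the quadratic as $c_2=m_\zeta(h\sigma)\to 0$ when the gains grow unbounded — with the threshold $\zeta_{\min}$ coming from the tangency $m_\zeta=\tfrac{\mathrm{d}}{\mathrm{d}\eta}m_\zeta=0$, which is exactly how the paper obtains \eqref{eq:eta_min}. Your convexity-plus-monotonicity argument for solvability of $m_\zeta(\eta)=0$ iff $\zeta\ge\zeta_{\min}$ is a mild repackaging of the paper's minimization of $\zeta(\eta)$ in \eqref{eq:zeta_eta_2}, and your explicit ordering of the two candidate bounds (sign of $m_\zeta$ at the neutral boundary, coincidence at $\zeta=\zeta_{\min}$) is a welcome detail the paper only supports through Fig.~\ref{fig:zeta}.

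There is, however, one concrete error: the claim that for $\zeta\ge 1$ both inequalities obtained from \eqref{eq:diff_op} ``hold for every $\sigma>0$,'' so that the neutral obstruction is absent. With $d=\zeta k_p$ the condition is $e^{h\sigma}\lvert\zeta-1\rvert<\zeta+1$, i.e. $h\sigma<\ln\bigl((\zeta+1)/\lvert\zeta-1\rvert\bigr)$ (note the absolute value in the paper's formula); your own first inequality $\zeta(e^{h\sigma}-1)<e^{h\sigma}+1$ fails for $\zeta>1$ as soon as $e^{h\sigma}\ge(\zeta+1)/(\zeta-1)$, so a finite difference-operator bound persists for every $\zeta\neq 1$ (only $\zeta=1$ makes the system retarded). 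The stated result survives because the collapse bound still comes first: evaluating $m_\zeta$ at $\eta_0=\ln\frac{\zeta+1}{\zeta-1}$ for $\zeta>1$ gives $m_\zeta(\eta_0)=-\frac{2\zeta(\zeta+1)}{\zeta-1}\,\eta_0<0$, so, since $m_\zeta(0)=4>0$, the smallest positive root of $m_\zeta(h\sigma)=0$ lies below $\eta_0/h$ — the same intermediate-value comparison you already used for $\zeta_{\min}\le\zeta<1$. Replace the ``obstruction absent'' claim by this evaluation and the argument is complete; the remaining caveat you flag yourself (that collapse of $\mathcal{D}_\sigma$ is fully captured by $c_2\to0$, to be confirmed by back substitution and the $\mathcal{D}$-subdivision continuity argument) is at the same level of rigor as the paper's own proof.
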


\begin{remark}
 Unlike the bounds given in Propositions~\ref{prop:hp} and~\ref{prop:scat_d}, the bound
 $\sigma^\ast_\zeta$ is not achievable in practice, as it requires infinite gains. It can, however,
 be approached arbitrarily close.
\end{remark}

\begin{proof}
The coefficients of~\eqref{eq:den_scat} take the form
\begin{align*}
 p_2(s) &= (1+e^{-hs})\zeta k_p + (1-e^{-hs})k_{p} \;, \\
 p_1(s) &= (1+e^{-hs})(bk_p+a)\zeta k_p + (1-e^{-hs})(b\zeta^2k_p^2+ak_p+k_i) \;, \\
 p_0(s) &= (1+e^{-hs})bk_i \zeta k_p + (1-e^{-hs}) a k_i \;.
\end{align*}
The formulas for computing the minimal gains are obtained by setting~\eqref{eq:den_scat} and its
derivative equal to zero. This proves (ii).

The maximal exponential decay approaches its maximal value, $\sigma^\ast_\zeta(\zeta)$, as $k_p$ and $k_i$ tend
to infinity, that is, as $c_2$ approaches 0. Notice that $c_2 = 0$ is equivalent to $m_\zeta(h\sigma) = 0$.
For fixed $\zeta$, there exists an $\eta > 0$ such that $m_\zeta(\eta) = 0$ if, and only if,
\begin{displaymath}
 \zeta_{\min} \le \zeta \;.
\end{displaymath}
To see this, set $m_\zeta(\eta)=0$ and write $\zeta$ as a function of $\eta$:
\begin{equation} \label{eq:zeta_eta_2}
 \zeta = \frac{(1+e^{\eta})^2}{2\eta e^{\eta}-(1+e^{\eta})(1-e^{\eta})} \;.
\end{equation}
To find the lower bound we solve $m_\zeta(\eta) = 0$ and
\begin{displaymath}
 \frac{\mathrm{d}}{\mathrm{d} \eta}m_{\zeta}(\eta) = -2e^\eta\left[ \zeta(e^\eta+\eta+1)) + 1+e^\eta \right] = 0
\end{displaymath}
simultaneously for $\eta$. This gives the implicit equation~\eqref{eq:eta_min}. The lower bound on $\zeta$
is finally found by substituting $\eta_{\sup}$ in~\eqref{eq:zeta_eta_2}, that is, it is given by $\zeta_{\min}$.

Thus, $m_\zeta(h\sigma) = 0$ is solvable only if $\zeta_{\min} \le \zeta$. For $0 < \zeta < \zeta_{\min}$,
condition~\eqref{eq:diff_op} determines the maximal exponential decay. When $d = \zeta k_p$,
condition~\eqref{eq:diff_op} reads
\begin{displaymath}
 h\sigma < \ln\left( \left| \frac{1+\zeta}{1-\zeta} \right| \right) \;.
\end{displaymath}
This proves (i).
\end{proof}

The following theorem is important from a practical point of view, since it gives an objective choice for
the free parameter in the scattering transformation.

\begin{theorem} \label{thm:main}
 Consider a plant~\eqref{eq:plant} in closed-loop with a PI controller~\eqref{eq:PI}
 satisfying $k_{p} \ge 0$ and $k_{i} \ge 0$. A communication channel with round-trip delay
 $h > 0$ stands between the system and the controller. Set $d = \zeta_{\min} k_p$ and apply
 the scattering transformations~\eqref{eq:Ts} at the channel end points. 
 The least upper bound on $\sigma$, $\sigma^\ast_{\zeta=\zeta_{\min}}$, is equal to $\sigma_{\sup}$.
\end{theorem}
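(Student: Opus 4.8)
The plan is to show that the two characterizations coincide by matching the defining implicit equations. Recall from Proposition~\ref{prop:scat_z}(i) that for $\zeta \ge \zeta_{\min}$ the least upper bound $\sigma^\ast_\zeta$ is the positive root of $m_\zeta(h\sigma) = 0$, where $m_\zeta(\eta) = (1+e^{\eta})\left( \zeta(1-e^{\eta}) + 1+e^{\eta} \right)-2\zeta \eta e^{\eta}$. In particular, at $\zeta = \zeta_{\min}$ we have $\sigma^\ast_{\zeta=\zeta_{\min}} = \eta^\ast/h$ where $\eta^\ast$ is the positive root of $m_{\zeta_{\min}}(\eta) = 0$. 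On the other hand, $\sigma_{\sup}$ is defined by~\eqref{eq:s_sup}, i.e. $\eta_{\sup} := h\sigma_{\sup}$ is the positive root of $2(1+e^{\eta}) + \eta(1-e^{\eta}) = 0$, which is exactly equation~\eqref{eq:eta_min}. So it suffices to prove that $\eta^\ast = \eta_{\sup}$, i.e. that $\eta_{\sup}$ is a root of $m_{\zeta_{\min}}$.

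First I would recall how $\zeta_{\min}$ and $\eta_{\sup}$ were produced in the proof of Proposition~\ref{prop:scat_z}: $\eta_{\sup}$ is the simultaneous solution of $m_\zeta(\eta) = 0$ and $\frac{\mathrm d}{\mathrm d\eta} m_\zeta(\eta) = 0$, and $\zeta_{\min}$ is obtained by substituting $\eta_{\sup}$ into the relation $\zeta = (1+e^{\eta})^2 / \bigl(2\eta e^{\eta}-(1+e^{\eta})(1-e^{\eta})\bigr)$ coming from solving $m_\zeta(\eta)=0$ for $\zeta$; this is precisely the formula in the Definition preceding the proposition. Hence, by construction, the pair $(\zeta_{\min}, \eta_{\sup})$ satisfies $m_{\zeta_{\min}}(\eta_{\sup}) = 0$. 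The only remaining point is to confirm that $\eta_{\sup}$ is the \emph{positive} root selected by Proposition~\ref{prop:scat_z}(i) — i.e. that it is the relevant (smallest positive, or unique positive in the feasible range) root, so that $\sigma^\ast_{\zeta=\zeta_{\min}} = \eta_{\sup}/h$ rather than some other root of $m_{\zeta_{\min}}$. Since $\zeta = \zeta_{\min}$ is exactly the threshold value at which a positive root of $m_\zeta$ first appears (that is the content of the "if and only if $\zeta_{\min} \le \zeta$" statement), at $\zeta = \zeta_{\min}$ the positive root is the unique one (a double root, where $m_{\zeta_{\min}}$ touches zero tangentially, since both $m$ and its $\eta$-derivative vanish there), so no ambiguity arises.

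Putting this together, the proof is short: invoke Proposition~\ref{prop:scat_z}(i) with $\zeta = \zeta_{\min}$ to write $h\sigma^\ast_{\zeta=\zeta_{\min}}$ as the positive root of $m_{\zeta_{\min}}$; observe from the Definition that $\zeta_{\min}$ was chosen precisely so that $m_{\zeta_{\min}}(\eta_{\sup}) = 0$ with $\eta_{\sup}$ solving~\eqref{eq:eta_min}; note that at the threshold $\zeta_{\min}$ this positive root is unique (it is a double root), hence $h\sigma^\ast_{\zeta=\zeta_{\min}} = \eta_{\sup}$; and finally identify~\eqref{eq:eta_min} with $h\sigma$ substituted into~\eqref{eq:s_sup}, giving $\eta_{\sup} = h\sigma_{\sup}$, whence $\sigma^\ast_{\zeta=\zeta_{\min}} = \sigma_{\sup}$.

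The main obstacle I anticipate is not a computation but a bookkeeping subtlety: making sure the "positive root" branch of the piecewise definition in Proposition~\ref{prop:scat_z}(i) actually evaluates to $\eta_{\sup}$ and not to a spurious larger root of the transcendental equation $m_{\zeta_{\min}}(\eta) = 0$. This requires a brief monotonicity/sign argument — for instance, checking that for $\eta \in (0, \eta_{\sup})$ one has $m_{\zeta_{\min}}(\eta) > 0$ and that $m_{\zeta_{\min}}$ has a double zero at $\eta_{\sup}$ (consistent with $\zeta_{\min}$ being exactly the threshold), so that $\eta_{\sup}$ is the relevant value. Once that is in place, everything else is a direct identification of the two implicit equations, since both reduce to $2(1+e^{\eta}) + \eta(1-e^{\eta}) = 0$.
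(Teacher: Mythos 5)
Your proposal is correct and follows essentially the same route as the paper: the paper's (very terse) proof likewise rests on identifying equation~\eqref{eq:eta_min}, which defines $\eta_{\sup}$ and hence $\zeta_{\min}$, with equation~\eqref{eq:s_sup} under $\eta = h\sigma_{\sup}$, so that the root of $m_{\zeta_{\min}}(h\sigma)=0$ from Proposition~\ref{prop:scat_z}(i) is exactly $\sigma_{\sup}$. Your additional remarks on the double root at the threshold and the selection of the relevant positive root simply make explicit bookkeeping that the paper leaves implicit.
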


\begin{proof}
 To compute $\zeta_{\min}$ we first solve~\eqref{eq:eta_min}, which is equivalent to~\eqref{eq:s_sup} with
 $\eta = h\sigma_{\sup}$.
\end{proof}

\begin{figure}
\begin{center}
 \includegraphics[width=0.80\linewidth]{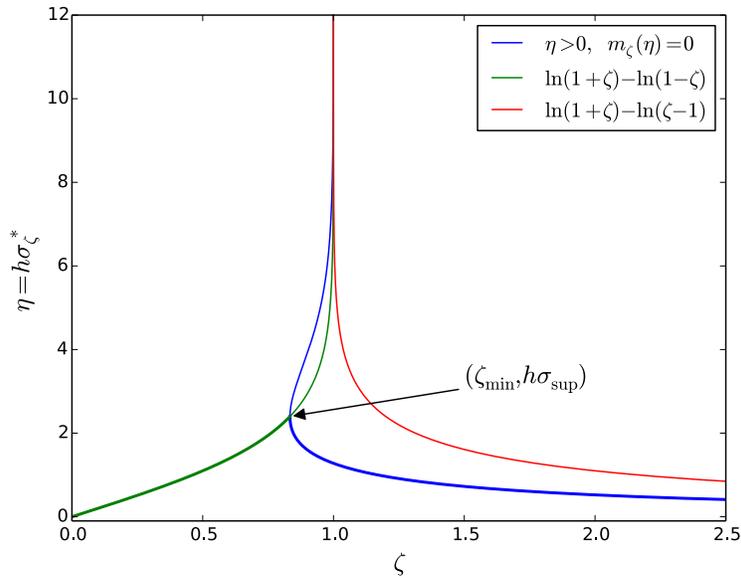}
\end{center}
\caption{Pairs $(\eta,\zeta)$ solution of $m_\zeta(\eta) = 0$, $\eta > 0$ (in blue). Bounds
 given by~\eqref{eq:diff_op} (in green and red). Thick lines correspond to the least upper bound on
 the exponential decay, multiplied by $h$.}
\label{fig:zeta}
\end{figure}

\begin{remark}
 The theoretical limit on the exponential decay is independent of the plant parameters. It
 depends only on the delay $h$ and it is given by
 \begin{displaymath}
  \sigma_{\sup} = \frac{\eta_{\sup}}{h} = \frac{2.3994}{h} \;.
 \end{displaymath}
 The optimal parameter $\zeta_{\min} = 0.8336$, for which $\sigma$ can be set arbitrarily close
 to $\sigma_{\sup}$, is independent of the plant parameters and the delay.
\end{remark}

In order to assist the reader, the necessary stability condition of the difference operator is plotted in Fig.~\ref{fig:zeta} with $d=\zeta k_{p}$ along with the solutions of equation $m_{\zeta}(\eta)=0$ in (\ref{eq:m_z}). As it results from the above analysis, ${\zeta}$ is indeed bounded from below by $\zeta_{min}$. Following this observation, the stability of the difference operator is always satisfied in the interval  $\eta_{\sup}/h > \sigma > 0$ and therefore, the $\sigma$-stability of the overall control-loop is finally established.

The $\sigma$-stability regions on the plane $d = \zeta_{\min} k_p$ are shown in Fig.~\ref{fig:regions_d_zkp}
(compare with Fig.~\ref{fig:regions_d_kp}). Notice that $\sigma^\ast_{\zeta=\zeta_{\min}}$ is almost twice as large
as $\sigma^\ast_{\zeta=1}$, the value obtained using the recipe usually found in the literature. The plane is also shown
in Fig.~\ref{fig:regions_3d}.

In view of the preceding remarks we propose the following design procedure:
\begin{enumerate}
 \item[\textbf{(i)}] Set $d = \zeta_{\min} k_p$. This choice is universal, in the sense that it does not depend explicitly
  on the plant parameters nor the delay.
 \item[\textbf{(ii)}] Choose a desired $\sigma$ such that $\eta_{\sup}/h > \sigma > 0$. This choice requires knowledge of
  the round-trip delay only. The choice of $\sigma$ should take into account the limits imposed by
  the actuator, i.e., large $\sigma$ may generate saturation.
 \item[\textbf{(iii)}] Use the corresponding minimal gains, as in Proposition~\ref{prop:scat_z}. This choice requires knowledge
  of the round-trip delay and the plant parameters.
\end{enumerate}

\section{Concluding Remarks} \label{sec:conc}

A simple instance of the use of the scattering transformation in passivity-based control has been analyzed
from the classical (i.e., frequency-domain) perspective of time-delay systems. The exponential decay rate
of the closed-loop system was chosen as a criterion to asses the performance of different control schemes.
This criterion leads to an optimal choice on the design parameter of the scattering transformation. Quite
remarkably, the optimal choice is independent of the plant parameters and the delays in the
communication channel. With the optimal choice, it is
possible to attain exponential decay ratios that are almost twice as large as those obtained by setting the
design parameter at the value suggested in the literature.

A theoretical limit on the exponential decay rate has been found. The limit does depend on the total delay
but, quite remarkably as well, it is independent of the plant parameters.

Distinct qualitative features, such as the shape and extension of the $\sigma$-stability regions or the neutral
and retarded nature, have been identified for different scenarios: a closed-loop system without delays, with delays,
with and without scattering transformation. This furthers our insight on the effect of delays and the scattering
transformation used to remedy it.

\bibliographystyle{plainnat}
\bibliography{delay}

\begin{thebibliography}{24}
\providecommand{\natexlab}[1]{#1}
\providecommand{\url}[1]{\texttt{#1}}
\expandafter\ifx\csname urlstyle\endcsname\relax
  \providecommand{\doi}[1]{doi: #1}\else
  \providecommand{\doi}{doi: \begingroup \urlstyle{rm}\Url}\fi

\bibitem[Anderson and Spong(1989)]{anderson1989}
Robert~J. Anderson and Mark~W. Spong.
\newblock Bilateral control of teleoperators with time delay.
\newblock \emph{{IEEE} Trans. Autom. Control}, 34:\penalty0 494 -- 501, May
  1989.

\bibitem[Bellman and Cooke(1963)]{bellman}
Richard Bellman and Kenneth~L. Cooke.
\newblock \emph{Differential-difference equations}.
\newblock Academic Press, Inc., New York, 1963.

\bibitem[Byrnes et~al.(1991)Byrnes, Isidori, and Willems]{byrnes1991b}
Christopher~I. Byrnes, Alberto Isidori, and Jan.~C. Willems.
\newblock Passivity, feedback equivalence, and the global stabilization of
  minimium phase nonlinear systems.
\newblock \emph{{IEEE} Trans. Autom. Control}, 36:\penalty0 1228--1240,
  November 1991.

\bibitem[Cheng(1992)]{cheng}
David~K. Cheng.
\newblock \emph{Field and Wave Electromagnetics}.
\newblock Addison Wesley, 1992.

\bibitem[Gu et~al.(2003)Gu, Kharitonov, and Chen]{gu}
Kequin Gu, Vladimir Kharitonov, and Jie Chen.
\newblock \emph{Stability of Time-Delay Systems}.
\newblock Birkh{\"a}user, Boston, 2003.

\bibitem[Hale and Lunel(1993)]{haleVerduyn}
Jack~K. Hale and Sjoerd M.~Verduyn Lunel.
\newblock \emph{Introduction to Functional Differential Equations}.
\newblock Springer-Verlag, New York, 1993.

\bibitem[Hill and Moylan(1976)]{hill1976}
David~J. Hill and Peter Moylan.
\newblock The stability of nonlinear dissipative systems.
\newblock \emph{{IEEE} Trans. Autom. Control}, pages 708--711, October 1976.

\bibitem[Kharitonov and Mondi{\'e}(2011)]{kharitonov2001}
Vladimir Kharitonov and Sabine Mondi{\'e}.
\newblock Quasipolyn{\^o}mes et stabilit{\'e} robuste.
\newblock In Jean-Pierre Richard, editor, \emph{Alg{\`e}bre et analyse pour
  l'automatique}. Hermes Science, 2011.

\bibitem[Krstic(2009)]{krstic}
Miroslav Krstic.
\newblock \emph{Delay Compensation for Nonlinear, Adaptive and PDE Systems}.
\newblock Birkh\"{a}user, Boston, 2009.

\bibitem[Michiels and Niculescu(2007)]{michiels}
Wim Michiels and Silviu-Iulian Niculescu.
\newblock \emph{Stability and Stabilization of Time-Delay Systems: An
  Eigenvalue-Based Approach}.
\newblock Society for Industrial and Applied Mathematics, Philadelphia, 2007.

\bibitem[Ne{\u \i}mark(1949)]{neimark1949}
Ju~I. Ne{\u \i}mark.
\newblock D-subdivisions and spaces of quasipolynomials.
\newblock \emph{Prikladnaya Matematika i Mekhanika}, 13:\penalty0 349 -- 380,
  1949.

\bibitem[Niemeyer and Slotine(1991)]{niemeyer1991}
G{\''u}nter Niemeyer and Jean-Jacques~E. Slotine.
\newblock Stable adaptive teleoperation.
\newblock \emph{{IEEE} J. Ocean. Eng.}, 16:\penalty0 152 -- 162, January 1991.

\bibitem[Niemeyer and Slotine(2004)]{niemeyer2004}
G{\''u}nter Niemeyer and Jean-Jacques~E. Slotine.
\newblock Telemanipulation with time delays.
\newblock \emph{The International Journal of Robotics Research}, 23:\penalty0
  873 -- 890, September 2004.
\newblock \doi{10.1177/0278364904045563}.

\bibitem[Nu{\~n}o et~al.(2009)Nu{\~n}o, Basa{\~n}es, Ortega, and
  Spong]{nuno2009}
Emmanuel Nu{\~n}o, Luis Basa{\~n}es, Romeo Ortega, and Mark~W. Spong.
\newblock Position tracking for non-linear teleoperators with variable time
  delay.
\newblock \emph{The International Journal of Robotics Research}, 28:\penalty0
  895 -- 910, July 2009.
\newblock \doi{10.1177/0278364908099461}.

\bibitem[Nu{\~n}o et~al.(2011)Nu{\~n}o, Basa{\~n}es, and Ortega]{nuno2011}
Emmanuel Nu{\~n}o, Luis Basa{\~n}es, and Romeo Ortega.
\newblock Passivity-based control for bilateral teleoperation: A tutorial.
\newblock \emph{Automatica}, 47:\penalty0 485 -- 495, March 2011.
\newblock \doi{10.1016/j.automatica.2011.01.004}.

\bibitem[Olgac and Sipahi(2004)]{olgac2004}
Nejat Olgac and Rifat Sipahi.
\newblock A practical method for analyzing the stability of neutral type
  lti-time delayed systems.
\newblock \emph{Automatica}, 40:\penalty0 847 -- 853, May 2004.
\newblock \doi{10.1016/j.automatica.2003.12.010}.

\bibitem[Olgac et~al.(2008)Olgac, Vyhl{\' i}dal, and Sipahi]{olgac2008}
Nejat Olgac, Tom{\' a}{\u s} Vyhl{\' i}dal, and Rifat Sipahi.
\newblock A new perspective in the stability assessment of neutral systems with
  multiple and cross-talking delays.
\newblock \emph{SIAM J. Control Optim.}, 47:\penalty0 327 -- 344, 2008.

\bibitem[Ortega et~al.(1998)Ortega, Lor{\'i}a, Nicklasson, and
  Sira-Ramirez]{ortegaR}
Romeo Ortega, Antonio Lor{\'i}a, J.~P. Nicklasson, and Hebert Sira-Ramirez.
\newblock \emph{Passivity-based Control of Euler-Lagrange Systems}.
\newblock Springer-Verlag, Berlin, 1998.

\bibitem[Silva et~al.(2001)Silva, Datta, and Bhattacharyya]{silva2001}
Guillermo~J. Silva, Aniruddha Datta, and S.~P. Bhattacharyya.
\newblock \textsc{PI} stabilization of first-order systems with time delay.
\newblock \emph{{IEEE} Trans. Autom. Control}, 37:\penalty0 2025--2031,
  December 2001.

\bibitem[Sipahi et~al.(2011)Sipahi, Niculescu, Abdallah, Michiels, and
  Gu]{sipahi2011}
Rifat Sipahi, Silviu-Iulian Niculescu, Chaouki~T. Abdallah, Wim Michiels, and
  Kequin Gu.
\newblock Stability and stabilization of systems with time delay.
\newblock \emph{{IEEE} Control Syst. Mag.}, 31:\penalty0 38 -- 65, February
  2011.
\newblock \doi{10.1109/MCS.2010.939135}.

\bibitem[van~der Schaft(2000)]{schaft}
Arjan~J. van~der Schaft.
\newblock \emph{$\mathcal{L}_2$-Gain and Passivity Techniques in Nonlinear
  Control}.
\newblock Springer-Verlag, London, 2000.

\bibitem[Willems(1972{\natexlab{a}})]{willems1972a}
Jan.~C. Willems.
\newblock Dissipative dynamical systems. part {I}: General theory.
\newblock \emph{Arch. Rat. Mech. and Analysis}, 45:\penalty0 321--351,
  1972{\natexlab{a}}.

\bibitem[Willems(1972{\natexlab{b}})]{willems1972b}
Jan.~C. Willems.
\newblock Dissipative dynamical systems. part {II}: Linear systems with
  quadratic supply rates.
\newblock \emph{Arch. Rat. Mech. and Analysis}, 45:\penalty0 352--393,
  1972{\natexlab{b}}.

\bibitem[Youla et~al.(1959)Youla, Castriota, and Carlin]{youla1959}
D.~C. Youla, L.~J. Castriota, and H.~J. Carlin.
\newblock Bounded real scattering matrices and the foundations of linear
  passive network theory.
\newblock \emph{IRE Trans. Circuit Theory}, pages 102 -- 124, March 1959.

\end{thebibliography}

\end{document}